\newtheorem{theo}{Theorem}[section]
\newtheorem{lem}[theo]{Lemma}
\newtheorem{cor}[theo]{Corollary}
\newtheorem{prop}[theo]{Proposition}
\newcommand{\mysection}[1]{\section{#1} \setcounter{equation}{0}}
\renewcommand{\proof}{{\sc Proof.} \quad}
\newcommand{\proofc}{{\sc Proof }}
\newcommand{\be}{\begin{equation} \label}
\newcommand{\ee}{\end{equation}}
\newcommand{\bea}{\begin{eqnarray}\label}
\newcommand{\eea}{\end{eqnarray}}
\newcommand{\bas}{\begin{eqnarray*}}
\newcommand{\eas}{\end{eqnarray*}}
\newcommand{\bit}{\begin{itemize}}
\newcommand{\eit}{\end{itemize}}
\renewcommand{\qed}{{}\hfill //// \\[15pt]}
\newcommand{\nn}{\nonumber}
\newcommand{\R}{\mathbb{R}}
\newcommand{\N}{\mathbb{N}}
\newcommand{\E}{\mathcal{E}}
\newcommand{\eps}{\varepsilon}
\newcommand{\abs}{\\[5pt]}
\newcommand{\ou}{\overline{u}}
\newcommand{\uu}{\underline{u}}
\newcommand{\ov}{\overline{v}}
\newcommand{\uv}{\underline{v}}
\newcommand{\oalpha}{\overline{\alpha}}
\newcommand{\ualpha}{\underline{\alpha}}
\newcommand{\hu}{\widehat{u}}
\newcommand{\ope}{{\cal E}}
\newcommand{\bx}{\overline{x}}
\newcommand{\hxz}{\widehat{x}_0}
\newcommand{\ba}{\begin{align}}
\newcommand{\ea}{\end{align}}
\begin{document}
\title{ A  singular differential equation stemming from an optimal control problem in financial economics}
\author{
Pavol Brunovsk\'y\footnote{{\bf Corresponding author:} brunovsky@fmph.uniba.sk; tel:+421 265425741; fax:+421 265412305}\\ {\small
Department of Applied Mathematics and Statistics, Comenius
University Bratislava,}\\ {\small 84248 Bratislava, Slovakia,}
\and
 Ale\v s \v Cern\'y\footnote{ales.cerny.1@city.ac.uk},\\
 {\small Cass Business School, City University London,}
\\\small{106 Bunhill Row, London EC1Y 8TZ, UK}
\and
Michael Winkler\footnote{michael.winkler@uni-due.de}\\
{\small Institut f\"ur Mathematik, Universit\"at Paderborn,}\\
{\small 33098 Paderborn, Germany} }
\date{}
\maketitle
\begin{abstract}
\noindent
  We consider the ordinary differential equation
  \bas
    x^2 u''=axu'+bu-c(u'-1)^2, \qquad x\in (0,x_0),
  \eas
  with $a\in\R, b\in\R$, $c>0$  and the singular initial condition $u(0)=0$,
  which in financial economics describes optimal disposal of an asset in a market with liquidity effects.
  It is shown in the paper that if $a+b < 0$  then no continuous solutions exist, whereas if $a+b>0$
  then there are infinitely many continuous solutions with indistinguishable asymptotics near 0. Moreover, it is proved that
  in the latter case there is precisely one solution $u$ corresponding to the choice $x_0=\infty$ which is such
  that $0 \le u(x)\le x$ for all $x>0$, and that this solution is strictly increasing and concave.\abs

  \noindent{\bf Key words: singular, ODE, initial value problem, supersolution, subsolution,  nonuniqueness}\\
  {\bf AMS Classification: 34A12, 91G80}\\
\end{abstract}

\section*{Introduction}
The paper is concerned with solutions of the problem
\begin{align}
 x^2 u''&=axu'+bu-c(u'-1)^2, \ x > 0, \label{0}\\
    u(0)&=0, \label{0.1}
    \end{align}
where $a$ and $b$ are real numbers and $c>0$.  By a solution of
(\ref{0}) in $[0,x_0]$ we mean a function $u\in C^0([0,x_0]) \cap
C^2((0,x_0))$ which satisfies (\ref{0}) for $x>0$.

Equation \eqref{0} arises in the study of a specific stochastic optimization similar to the classical LQ problem. The equation is singular at $x=0$ which in itself is not particularly noteworthy, since stochastic LQ problems with geometric Brownian state variable invariably give rise to nonlinear singular ODEs/PDEs of the type seen in \eqref{0} and in \eqref{0.2} below, see for example \cite{wirl}. Our problem derives its rich structure from the fact that the initial condition \eqref{0.1}, too, refers to the singular point $x=0$. This, as we demonstrate below, poses certain technical obstacles in establishing existence and, more importantly, gives rise to infinitely many solutions with indistinguishable asymptotics near zero (Corollary \ref{nu}).

As was already highlighted, the ODE
\eqref{0}, \eqref{0.1} is not artificial, rather it stems from a well-defined optimization problem in financial economics. Specifically, it is obtained from the PDE
\be{0.2} \left\{\begin{array}{l} \frac12y^2\sigma^2w_{yy}+\lambda
yw_y + r^*zw_z-\rho w + \frac{(y-w_z)^2}{4\eta}=0,\\ w(y,0)=0,
\end{array} \right.
\ee
using the scaling $$w(y,z)=\frac{y^2}\eta u(x), \ x =\eta \frac zy
.$$

The PDE \eqref{0.2} in turn represents the Hamilton-Jacobi-Bellman
equation for the optimal value function $w$ of the following
dynamic optimization problem:
\be{0.2b}
 w(y(0),z(0))=\hbox{max } E\left(\int_0^{T(z=0)}e^{-\rho s}f(y(s),z(s))(y(s)-\eta f(y(s),z(s)))ds \right),
\ee
subject to
\begin{align}
 dy(t)&=\lambda y(t)dt + \sigma y(t)dB(t), \label{0.4}\\
dz(t)&=(r^*z(t)-f(y(t),z(t)))dt, \label{0.2c}
 \end{align}
over the controls $f:\mathbb R_+\times \mathbb R_+\mapsto \mathbb R$, $T(z=0)$ being the
first arrival time at $z=0$ and $B(t)$ the standard Wiener
process.

Optimization \eqref{0.2b} with dynamics given by (\ref{0.4}, \ref{0.2c}) models optimal liquidation of a large
quantity of an asset whose market price is adversely affected by
its ongoing sale. In this context $z(0)$ represents the quantity of the asset yet to be sold, $y(0)$ is the prevailing price
and $w$ captures the expected revenue of an optimal sale of quantity $z$ conditional on the current price being $y$.
For more details we refer the reader to \cite{cerny}.

The problem of
existence is the first principal subject of this
paper. It is shown that for $a+b> 0$ the problem \eqref{0}, \eqref{0.1} has a continuum of local solutions and at least one global solution bounded between 0 and $x$ (Section
1), while for $a+b<0$ no solutions exist (Section 2).

If one admits the possibility that there are multiple solutions to \eqref{0}, \eqref{0.1},
one immediately has to deal with the additional challenge of identifying ``the'' right solution relevant to the associated optimization problem.
The economic nature of the optimization (\ref{0.2b}-\ref{0.2c}) strongly suggests that the relevant solution of \eqref{0}, \eqref{0.1} should be increasing
(larger amount of asset means larger revenue) but concave
(decreasing returns to scale, since larger volume of sales has greater adverse effect on the sale price of the asset). However,
there is no indication in the form of equation \eqref{0} that a solution with these properties should exist in the first place.
In Section 3 we thus analyze monotonicity and convexity properties of a
solution bounded between $0$ and $x$ (Proposition \ref{uprime}), the upper bound corresponding to an immediate sale of the entire stock of the asset without any adverse price effect.

In Section 4 we address the question of global uniqueness.
We show that there is exactly
one solution on $\mathbb{R}_+$ which remains bounded between 0 and $x$, and this
solution is necessarily increasing and concave (Proposition \ref{uniqueness2}).
Finally, in Section 5 we examine finer aspects of local non-uniqueness.

In the current paper we focus on the intricacies of the initial
value problem \eqref{0}, \eqref{0.1}. The implication of the
results for the underlying optimal control problem is a delicate
issue left to further research.

A paper similar in spirit to ours is \cite{Cidetal}. It studies
a specific second order equation with a singularity at 0,
arising in the theory of general relativity.
As in our case, the existence of infinitely many solutions
is established by the method of upper and lower solutions and then
the properties of the set of solutions are studied.

As far as the local existence is concerned, Liang in  \cite{liang} carried out a systematic study of second
order singular initial value problems of the form \be{liang} u'' =
\frac{1}{x}F(x,u,u'), \ee where $F$ is a continuous function and
the initial conditions satisfy $F(0,u(0),u'(0))=0$. The key
quantity in this study is $\gamma:=\frac{\partial}{\partial
u'}F(0,u(0),u'(0))$. It is shown that for $\gamma < 0$ local
uniqueness holds, while for $\gamma>0$ solutions become unique
only after the asymptotics of $u'$ have been fixed to the order
$x^\gamma$ near $x=0$. The case $\gamma=0$ is not treated. Each
solution has an asymptotic expansion in powers of $x$ and
$x^\gamma$ (provided $\gamma$ is not an integer), and asymptotic
expansion of $u^{(n)}$ is obtained by differentiating $n$-times
the asymptotic expansion for $u$.

In contrast, we study a specific singular IVP from a wider class
\be{us} u'' = \frac{1}{x^\alpha}F(x,u,u'), \ee with $\alpha = 2$,
$u(0)=0$, $u'(0)=1$ and $F(x,u,u'):=axu'+bu-c(u'-1)^2$. Like in
\cite{liang} our ODE arises from a self-similar solution of a PDE.
However, we deal with a borderline case where
$\frac{\partial}{\partial u'}F(0,u(0),u'(0))=0$. As a result,
standard blow-up techniques are not productive and we have to
resort to the method of sub-supersolutions.

Finally, we remark that it is not uncommon for HJB equations
associated with stochastic optimization to exhibit multiple
solutions. The meaningful solution then has to be selected by
employing additional criteria. In the case of linear-quadratic
problems the relevant solution is identified as the
maximal/minimal one. In other cases the optimal solution can be
singled out as the unique viscosity solution of the HJB equation,
cf. \cite{Barles}. In our case these criteria do not seem to be
helpful. Rather, the significant solution is uniquely determined
by its global monotonicity and concavity properties.
\mysection{Existence for $a+b\ge 0$}\label{sect_exist}
In essence, existence will be proved similarly as in
\cite{Cidetal}. That is,  ordered pairs of  sub- and a
supersolutions of  (\ref{0}) will be found, and an application of
a standard existence result for second order boundary-value
problems will provide solutions lying in between, cf.
\cite{decoster}. As in \cite{Cidetal}, due to the singularity in
the ODE (\ref{0}), an approximation procedure will be involved in
the proof. However, compared to \cite{Cidetal}, the presence of
$u'$ in the equation will require additional arguments. We isolate
technical arguments in the following propositions.
\begin{prop}\label{prop1} Let $0<x_1<x_2$ and suppose that there
exist $\uu,\ou\in C^2([x_1,x_2])$ such that
\begin{align}
\uu&\le\ou \mbox{ in }  [x_1,x_2];\\ \ope \uu &< 0 \mbox{ in }
[x_1,x_2];\\ \ope \ou &> 0  \mbox{ in }  [x_1,x_2];
\end{align}
 the operator $\ope$ being defined according to \be{8.2}
    \ope u := -x^2 u'' + axu' + bu - c(u'-1)^2
\ee for functions $u$ which belong to $C^2([x_1,x_2])$. Then for
each $u_1\in[\uu(x_1),\ou(x_1)], \ u_2\in[\uu(x_2),\ou(x_2)]$
there exists a solution $u\in C^2([x_1,x_2]) $ to \eqref{0} in
$[x_1,x_2]$ satisfying $\uu\le u\le\ou, \ u(x_1)=u_1, \
u(x_2)=u_2$.
\end{prop}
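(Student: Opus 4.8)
The plan is to rewrite \eqref{0} on the compact interval $[x_1,x_2]$, where $x^{-2}$ is bounded and the equation is regular, as the Dirichlet problem
\[
 u''=f(x,u,u'), \quad u(x_1)=u_1,\ u(x_2)=u_2, \qquad f(x,u,p):=\frac{axp+bu-c(p-1)^2}{x^2},
\]
and to solve it by the method of well-ordered lower and upper solutions (cf. \cite{decoster}). The hypotheses say precisely that $\uu,\ou$ are a strict lower and a strict upper solution in the sense $\uu''>f(x,\uu,\uu')$ and $\ou''<f(x,\ou,\ou')$ (these are just $\ope\uu<0$, $\ope\ou>0$ solved for the second derivative), that they are ordered $\uu\le\ou$, and that the admissible data obey $\uu(x_i)\le u_i\le\ou(x_i)$. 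First I would note that on the compact set $\{(x,u):x\in[x_1,x_2],\ \uu(x)\le u\le\ou(x)\}$ the term $|bu|$ is bounded, say by $M$, so that $f$ satisfies $|f(x,u,p)|\le h(|p|)$ with $h(s):=x_1^{-2}\bigl(|a|x_2 s+M+c(s+1)^2\bigr)$.

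The core point, and the step I expect to be the main obstacle, is the a priori gradient bound. Because the nonlinearity carries the term $c(u'-1)^2$, the right-hand side $f$ grows \emph{quadratically} in $p$, which is exactly the borderline case of the classical Nagumo condition. I would verify that the majorant $h$ above is admissible by checking $\int^{\infty}\frac{s\,ds}{h(s)}=\infty$, which holds since $h$ is quadratic and hence $s/h(s)\sim\mathrm{const}/s$ for large $s$. Together with the boundedness of $x^{-2}$ on $[x_1,x_2]$, the usual Nagumo argument then produces a constant $N>0$, depending only on $x_1,x_2,a,b,c$ and on $\min\uu,\max\ou$, such that every solution $u$ of the Dirichlet problem with $\uu\le u\le\ou$ satisfies $\|u'\|_{\infty}\le N$.

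With the gradient bound in hand, the rest follows the standard truncation scheme. I would modify $f$ outside the order interval, replacing $u$ by $\ou(x)$ (resp.\ $\uu(x)$) and adding a positive penalty $+\tfrac{u-\ou}{1+(u-\ou)^2}$ when $u>\ou(x)$ (resp.\ the symmetric term when $u<\uu(x)$), and simultaneously truncate the $p$-dependence at level $N$, yielding a bounded continuous $\tilde f$. The modified problem $u''=\tilde f(x,u,u')$ is recast as a fixed-point equation via the Green's function of $-d^2/dx^2$ on $[x_1,x_2]$ with the prescribed boundary data; boundedness of $\tilde f$ makes the associated operator compact, and Schauder's theorem supplies a solution. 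The strict inequalities $\ope\uu<0$, $\ope\ou>0$ force any such solution into the order interval by a weak maximum principle: at an interior point where $u-\ou$ attains a positive maximum one has $u'=\ou'$ and $(u-\ou)''\le0$, whence $u''=\tilde f(x,\ou,\ou')>f(x,\ou,\ou')>\ou''$, a contradiction (and symmetrically for $\uu$, the boundary being excluded since $\uu(x_i)\le u_i\le\ou(x_i)$). Thus the $u$-modification is inactive, and the Nagumo bound $\|u'\|_\infty\le N$ makes the $p$-truncation inactive too, so $u$ solves \eqref{0} on $[x_1,x_2]$ with $\uu\le u\le\ou$ and the correct boundary values. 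Finally, since $f$ is continuous, $u''=f(x,u,u')$ is continuous and $u\in C^2([x_1,x_2])$, which completes the proof.
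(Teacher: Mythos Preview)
Your argument is correct and follows essentially the same route as the paper's proof: both rewrite \eqref{0} as $u''=f(x,u,u')$, observe that on $[x_1,x_2]\times[\uu,\ou]$ the right-hand side grows at most quadratically in $u'$ (the paper phrases this as the Bernstein condition $|f|\le A+Bu'^2$, you as the borderline Nagumo condition $\int^\infty s/h(s)\,ds=\infty$), and conclude by the classical lower/upper solution theorem for two-point boundary value problems. The paper simply cites Nagumo \cite{nagumo} and De Coster--Habets \cite{decoster} for the existence step, whereas you have unpacked the standard truncation--Schauder--maximum-principle machinery behind that citation; the substance is the same.
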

\begin{proof}
Rewrite equation \eqref{0} as $$u''=f(x,u,u')$$ with $$f(x,u,u')=
x^{-1}au' + x^{-2}bu - x^{-2}c(u'-1)^2.$$ For $x_1\le x\le x_2$
and $\uu(x)\le u\le  \uu(x)$,  $f$ satisfies the Bernstein
condition \cite{bernstein} $$|f(x,u,u')|\le A+Bu'^2$$ for suitable
$A,B>0$. Therefore, the result follows from  Nagumo \cite{nagumo},
Satz 2, cf. also \cite{decoster}, Theorem II-1.3 for a more recent
reference.
\end{proof}

{\bf Remark.} Recall that regularity of a differential equation is
inherited by its solutions (cf. \cite{hartman}, Chapter V,
Corollary 4.1). In particular, since the expression for $u''$ is
$C^\infty$ in $x,u,u'$ for $x>0$, any solution $u$ of \eqref{0} in
$[x_1,x_2]$ with $0<x_1$ is in $C^\infty([x_1,x_2])$.

\begin{prop}\label{prop1a}
\noindent (i) Let $x_0\in (0,\infty)$. Suppose that there exist
$\uu,\ou\in C^0[0,x_0]\cap C^2(0,x_0)$ satisfying
\begin{equation}\label{vnule} \uu(0)=\ou(0)=0
\end{equation}
in addition to (1.1)-(1.3) with $x_1=0, x_2=x_0$. Then, for each
$u_0\in [\uu(x_0), \ou(x_0)]$ there exists a solution of
(0.1),(0.2) in $[0,x_0]$ such that $\uu \le u \le \ou$ in
$(0,x_0)$ and $u(x_0)=u_0$.

\noindent (ii) Let $\uu, \ou$ satisfy (1.1)-(1.3) for $x_1=0$ and
$x_2=\infty$ as well as \eqref{vnule}. Then, there exists a
solution of (0.1), (0.2) in $[0,\infty)$ such that $\uu \le u \le \ou$.
\end{prop}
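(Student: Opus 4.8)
The plan is to deduce both statements from the compact-interval existence result of Proposition \ref{prop1} by an exhaustion-and-compactness argument, the value of $u$ at the singular point being recovered in the limit solely from the squeezing $\uu \le u \le \ou$.

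For part (i) I would first fix a sequence $\eps_n \downarrow 0$ and, on each interval $[\eps_n, x_0]$, apply Proposition \ref{prop1}: hypotheses (1.1)-(1.3) hold there by assumption, so choosing the right-hand datum $u(x_0) = u_0$ together with any admissible left-hand value, for instance $u(\eps_n) = \uu(\eps_n) \in [\uu(\eps_n), \ou(\eps_n)]$, yields a solution $u_n \in C^2([\eps_n, x_0])$ of \eqref{0} with $\uu \le u_n \le \ou$ on $[\eps_n, x_0]$. The precise choice of the left endpoint value is immaterial, since the admissible bracket $[\uu(\eps_n), \ou(\eps_n)]$ shrinks to $\{0\}$ as $n \to \infty$.

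The core step is a locally uniform a priori estimate on $(0, x_0]$. Fixing $\delta \in (0, x_0)$, for all $n$ with $\eps_n < \delta$ the ordering $\uu \le u_n \le \ou$ supplies a uniform $C^0$-bound on $[\delta, x_0]$; since the coefficients $x^{-1}, x^{-2}$ are bounded there, the right-hand side $f(x, u_n, u_n')$ satisfies the Nagumo-Bernstein growth condition $|f| \le A_\delta + B_\delta (u_n')^2$ used in the proof of Proposition \ref{prop1}, which combined with the $C^0$-bound and the bounded oscillation of $u_n$ produces a uniform $C^1$-bound on $[\delta, x_0]$; feeding this back into \eqref{0} bounds $u_n''$ uniformly as well. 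Arzel\`a--Ascoli then extracts a subsequence converging in $C^1([\delta, x_0])$, whence $u_n'' = f(x, u_n, u_n')$ converges uniformly and the limit is a $C^2$ solution of \eqref{0} on $[\delta, x_0]$. A diagonal extraction over $\delta = 1/k$ yields a solution $u \in C^2((0, x_0])$ with $\uu \le u \le \ou$ and $u(x_0) = u_0$; squeezing against $\uu(0) = \ou(0) = 0$ forces $u(x) \to 0$ as $x \to 0^+$, so $u$ extends to an element of $C^0([0, x_0]) \cap C^2((0, x_0))$ satisfying \eqref{0.1}. Part (ii) runs on the same engine: apply part (i) on $[0, R_m]$ with $R_m \uparrow \infty$ and an arbitrary admissible value at $R_m$, then use the identical $\delta$-localized estimates and a diagonalization over compact subsets of $(0, \infty)$ to pass to a limit, again continuous at $0$ by the same squeezing.

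I expect the decisive difficulty to lie in the gradient estimate: the Nagumo-Bernstein constants $A_\delta, B_\delta$ blow up as $\delta \to 0$, so one can only obtain bounds that are uniform on compact subsets of $(0, x_0]$ and must therefore build the limit solution by diagonalization rather than by a single passage to the limit on $[0, x_0]$. In particular the estimates say nothing at $x = 0$, and the initial condition \eqref{0.1} has to be retrieved entirely from the trapping between $\uu$ and $\ou$ rather than from any compactness of the $u_n$.
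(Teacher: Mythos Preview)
Your proposal is correct and follows essentially the same route as the paper: approximate on $[\eps_n,x_0]$ via Proposition~\ref{prop1} with left value $\uu(\eps_n)$ and right value $u_0$, obtain uniform $C^1$ (hence $C^2$) bounds on compact subsets of $(0,x_0]$ via the Bernstein/Nagumo estimate, extract by Arzel\`a--Ascoli and diagonalization, and recover $u(0)=0$ from the squeezing $\uu\le u\le\ou$; part~(ii) is handled identically by exhausting $[0,\infty)$.
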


\begin{proof} (i) By Proposition \ref{prop1}, for each $\eps \in (0,x_0)$ and each
$u_0\in [\uu(x_0), \ou(x_0)]$ there exists a solution
  $u_\eps \in C^2([\eps,x_0])$ of
  \be{9.12}
    \left\{ \begin{array}{l}
    \ope u_\eps =0 \qquad \mbox{in } [\eps,x_0], \\[1mm]
    u_\eps(\eps)=\uu(\eps), \quad u_\eps(x_0)=u_0,
    \end{array} \right.
  \ee
  which satisfies
  \be{9.13}
    \uu(x) \le u_\eps(x) \le \ou(x) \qquad \mbox{for all } x\in (\eps,x_0).
  \ee
Let now $\eps_n\searrow 0$ for $n\to\infty$. For fixed $n$, the functions
$u_{\eps_k}$ with  $k\ge n$ are uniformly bounded on
$[\eps_n,x_0]$ . By \cite{bernstein} (cf. also \cite{decoster},
I.4.3 page 45), the same holds for their derivatives
$u'_{\eps_k}$. Therefore, on $[\eps_n,x_0]$, $u_{\eps_k}$ are
equicontinuous and, moreover, from \eqref{0} it follows that
$u''_{\eps_k}$, $k\ge n$ are uniformly bounded on $[\eps_n,x_0]$.
Thus $u'_{\eps_k}$ are equicontinuous and, in turn, because od
\eqref{0}, $u''_{\eps_k}$ are equicontinous as well on
$[\eps_n,x_0]$. Therefore, one can pick a subsequence
$u_{\eps_{k_j}}$ which converges $C^2$ uniformly to a $C^2$
function $u^n$ satisfying \eqref{0} on $[\eps_n,x_0]$ together
with  $u^n(x_0)=u_0$ and $\uu(x)\le u^n(x)\le \ou(x)$ . By
standard diagonal selection we can pick a subsequence from the
sequence $u_{\eps_{k_j}}$ which converges pointwise in $[0, x_0]$
and uniformly in $[\eps,x_0]$ for each $0<\eps\le x_0$ to a
function $u\in C^0[0,x_0]\cap C^2(0,x_0]$ and satisfying the
requriements of item (i) of the Proposition.
\newline (ii) By (i), for each  $\eps$ we have a solution of
  (0.1), (0.2) such  that $u_\eps (0)=0$ and $\uu \le u \le \ou$
  in $[0, 1/\eps]$. Applying for $\eps>0$ the same extraction idea as in (i) we
  obtain the claimed solution in $[0,\infty)$.
\qed
\end{proof}

%
\begin{prop}\label{basicex}
i) For $a+b>0$ and any $x_0>0$ there is a continuum of solutions to (\ref{0}), (\ref{0.1}) on $[0,x_0]$ such that $0\le u \le x$.

ii) For $a+b\ge 0$ there is at least one solution of (\ref{0}), (\ref{0.1}) on $[0,\infty)$ such that $0\le u \le x$.
\end{prop}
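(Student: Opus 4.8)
The plan is to realise the two-sided bound $0 \le u \le x$ directly through the choice of barriers, taking the constant function $\uu \equiv 0$ as subsolution and the identity $\ou(x) := x$ as supersolution; the solutions produced by Proposition \ref{prop1a} will then be sandwiched between them and hence automatically satisfy $0 \le u \le x$. First I would record the two elementary computations with the operator in \eqref{8.2}: since $\uu' = \uu'' = 0$ we get $\ope\uu = -c(0-1)^2 = -c < 0$, and since $\ou' \equiv 1$, $\ou'' \equiv 0$ we get $\ope\ou = ax + bx - c(1-1)^2 = (a+b)x$. Both barriers vanish at the origin and satisfy $\uu \le \ou$ on $[0,\infty)$, so \eqref{vnule} and hypothesis (1.1) hold; hypothesis (1.2) holds because $\ope\uu \equiv -c < 0$; and when $a+b>0$ hypothesis (1.3) holds for every $x>0$ because $\ope\ou = (a+b)x > 0$ there. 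The inequality in (1.3) degenerates only at the single point $x=0$, but this is immaterial: the construction in Proposition \ref{prop1a} invokes Proposition \ref{prop1} only on subintervals $[\eps,x_0]$ with $\eps > 0$, where $\ope\ou > 0$ genuinely holds, and it uses the origin solely through the boundary condition \eqref{vnule}.

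For part (i), fixing $a+b>0$ and an arbitrary $x_0>0$, I would feed the pair $(\uu,\ou)$ into Proposition \ref{prop1a}(i). For each prescribed terminal value $u_0 \in [\uu(x_0),\ou(x_0)] = [0,x_0]$ this returns a solution $u$ of \eqref{0}, \eqref{0.1} on $[0,x_0]$ with $u(x_0)=u_0$ and, since $\uu \equiv 0$ and $\ou(x)=x$, with $0 \le u \le x$. Solutions carrying different terminal data are distinct, so $u_0 \mapsto u$ is an injection of the interval $[0,x_0]$ into the solution set, which therefore contains a continuum of solutions bounded between $0$ and $x$, as claimed.

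For part (ii) with $a+b>0$ the argument is identical, except that the same pair $(\uu,\ou)$ now verifies (1.1)--(1.3) and \eqref{vnule} on all of $[0,\infty)$, and Proposition \ref{prop1a}(ii) produces a single global solution with $0 \le u \le x$. The one case requiring separate thought --- and the only real obstacle in the whole argument --- is the borderline $a+b=0$, where $\ou(x)=x$ is no longer a strict supersolution because $\ope\ou = (a+b)x$ vanishes identically. Rather than repairing $\ou$ by a correction term such as $x - \delta x^\gamma$ (which one can force to be a supersolution near $x=0$ but not simultaneously for large $x$), I would exploit precisely this vanishing: $\ope\ou \equiv 0$ says that $u(x):=x$ is itself an exact solution of \eqref{0} when $a+b=0$, and since it satisfies $u(0)=0$ and trivially $0 \le u \le x$, it settles part (ii) in the borderline case outright.
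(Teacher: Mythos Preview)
Your proposal is correct and follows exactly the paper's approach: the paper also takes $\uu\equiv 0$ and $\ou(x)=x$ as barriers, applies Proposition~\ref{prop1a}, and disposes of the borderline case $a+b=0$ by observing that $u(x)=x$ is itself a solution. Your write-up simply supplies the explicit verifications and the continuum argument that the paper leaves implicit.
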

\begin{proof}
For $a+b>0$ it is readily checked that $\uu(x)\equiv 0$ is a
subsolution and $\ou(x)=x$ is a supersolution in $[0,\infty)$. The
claim thus follows from Proposition \ref{prop1a}. For $a+b=0$,
$u(x)=x$ is a global solution.\qed
\end{proof}

The problem \eqref{0}, \eqref{0.1} can for $a+b>0$ be formally solved by a
power series. We let \be{7.1}
    k_{0} :=1, \quad
      \quad
    k_{1} :=-\frac{2}{3}\sqrt{\frac{a+b}{c}},
\ee
and inductively define
\be{7.2}
    f_{n}(x):=\sum_{i=0}^{n}k_{i}x^{1+i/2},
\ee
where
\be{7.3}
    k_{n+1} :=\lim_{x\rightarrow 0_{+}}\frac{2\mathcal{E}f_{n}}{3ck_{1}(n+3)x^{(n+2)/2}}
\ee
for $n\ge 1$.

\begin{lem}\label{lem8}
Let $a+b>0$. Then the coefficients $\{k_{i}\}_{i=0}^n$ are well-defined and $\mathcal{E}f_{n}=O(x^{(n+2)/2})$
as $x\searrow 0$
for all $n\in \mathbb{N}$.
\end{lem}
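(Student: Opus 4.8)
The plan is to prove both assertions simultaneously by induction on $n$. The structural observation that drives everything is that, for each fixed $n$, the function $f_n$ is $x$ times a polynomial in $\sqrt{x}$, so the linear part $Lf_n := -x^2 f_n'' + ax f_n' + b f_n$ and the quantity $(f_n'-1)^2$ are again finite sums of terms $\kappa\, x^{j/2}$ with $j\in\N$. Hence $\mathcal{E}f_n$ is such a finite sum, and the estimate $\mathcal{E}f_n = O(x^{(n+2)/2})$ is equivalent to the vanishing of all coefficients of $x^{j/2}$ with $j<n+2$. Granting this, $\mathcal{E}f_n/x^{(n+2)/2}$ converges as $x\searrow 0$ to the coefficient $c_n$ of $x^{(n+2)/2}$, a finite number; since $c>0$, $n+3\neq 0$ and $k_1\neq 0$ (because $a+b>0$), the denominator $3ck_1(n+3)$ is nonzero, so the limit \eqref{7.3} exists and $k_{n+1}$ is well-defined. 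Thus the well-definedness of the coefficients and the $O$-estimate are two sides of the same inductive hypothesis.

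First I would record the elementary computations. Writing $\lambda_i := a(1+i/2)+b-(i/2)(1+i/2)$, one checks $L(x^{1+i/2})=\lambda_i x^{1+i/2}$, so $Lf_n=\sum_{i=0}^n k_i\lambda_i x^{1+i/2}$ with leading term $\lambda_0 x=(a+b)x$. Since $k_0=1$, the constant term of $f_n'$ is $1$, whence $f_n'-1=\sum_{i=1}^n k_i(1+i/2)x^{i/2}=O(x^{1/2})$ with leading coefficient $\tfrac32 k_1$ at order $x^{1/2}$, and therefore $(f_n'-1)^2=O(x)$ with leading term $\tfrac94 k_1^2 x$. For the base case I would check $n=0$, where $f_0=x$ gives $\mathcal{E}f_0=(a+b)x=O(x)$, and then $n=1$: the only order-$x$ contributions to $\mathcal{E}f_1=Lf_1-c(f_1'-1)^2$ are $(a+b)x$ and $-\tfrac94 c k_1^2 x$, and the value $k_1$ in \eqref{7.1} is designed precisely so that $\tfrac94 c k_1^2=a+b$; these cancel, leaving $\mathcal{E}f_1=O(x^{3/2})$.

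For the inductive step, assume $k_0,\dots,k_n$ are defined for some $n\ge 1$ with $\mathcal{E}f_n=O(x^{(n+2)/2})$, and set $k_{n+1}$ by \eqref{7.3}. Writing $f_{n+1}=f_n+k_{n+1}x^{(n+3)/2}$ and expanding, I obtain
\begin{align*}
\mathcal{E}f_{n+1} = \mathcal{E}f_n &+ k_{n+1}\lambda_{n+1}x^{(n+3)/2} - 2c\,(f_n'-1)\,k_{n+1}\tfrac{n+3}{2}x^{(n+1)/2} \\
&- c\,k_{n+1}^2\Big(\tfrac{n+3}{2}\Big)^2 x^{n+1}.
\end{align*}
The new linear term is of order $(n+3)/2$, and for $n\ge 1$ the pure square term is of order $n+1>(n+2)/2$; both exceed $(n+2)/2$. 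Because $f_n'-1$ starts at order $x^{1/2}$ with coefficient $\tfrac32 k_1$, the cross term contributes to order $x^{(n+2)/2}$ exactly $-2c\cdot\tfrac32 k_1\cdot k_{n+1}\tfrac{n+3}{2}=-\tfrac32 c k_1(n+3)k_{n+1}$, all of its further contributions being of order $\ge(n+3)/2$. Hence the coefficient of $x^{(n+2)/2}$ in $\mathcal{E}f_{n+1}$ equals $c_n-\tfrac32 c k_1(n+3)k_{n+1}$, which vanishes for exactly the choice $k_{n+1}=2c_n/\big(3ck_1(n+3)\big)$ prescribed by \eqref{7.3}. Since all added terms are of order $\ge(n+2)/2$ and only half-integer powers of $x$ occur, no term of order below $(n+2)/2$ is created, so after this cancellation the lowest surviving order is $(n+3)/2$, giving $\mathcal{E}f_{n+1}=O(x^{(n+3)/2})$ and closing the induction.

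The individual computations are routine; the point that requires care — and the main thing to get right — is the order bookkeeping in the perturbation step. The crucial structural fact is that the sole order-$x^{(n+2)/2}$ contribution from adding $k_{n+1}x^{(n+3)/2}$ comes from multiplying the leading $x^{1/2}$-part of $(f_n'-1)$ against the $x^{(n+1)/2}$-factor in the cross term; confirming that the new linear term ($x^{(n+3)/2}$) and the square term ($x^{n+1}$, using $n\ge 1$) are genuinely of higher order is exactly what guarantees that a single free coefficient $k_{n+1}$ suffices to remove the obstruction at each stage.
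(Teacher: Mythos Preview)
Your proof is correct and follows essentially the same inductive strategy as the paper: expand $\mathcal{E}f_{n+1}$ in terms of $\mathcal{E}f_n$ plus the perturbation terms arising from adding $k_{n+1}x^{(n+3)/2}$, identify the sole contribution at order $x^{(n+2)/2}$ as coming from the cross term $-2c(f_n'-1)\cdot\frac{n+3}{2}k_{n+1}x^{(n+1)/2}$, and observe that the definition of $k_{n+1}$ is precisely what annihilates this coefficient. Your write-up is more explicit than the paper's (you spell out the eigenvalue computation for $L$, verify the base cases $n=0,1$ in detail, and check the order $n+1\ge (n+3)/2$ of the square term), but the argument is the same.
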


\proof
The statement clearly holds for $n=1$. Arguing by induction, we suppose that it is valid for some $n\ge 1$.
Then
\begin{align}
\mathcal{E}f_{n+1} &=\mathcal{E}f_{n}-2c(f_{n}^{\prime}-1)(1+(n+1)/2)k_{n+1}x^{(n+1)/2}+O(x^{(n+3)/2})\notag \\
&=\mathcal{E}f_{n}-\frac{3}{2}ck_{1}(n+3)k_{n+1}x^{(n+2)/2}+O(x^{(n+3)/2})
\qquad \mbox{as } x\searrow 0. \label{8.1}
\end{align}
Since $\E f_{n}$ is a polynomial in powers of $\sqrt{x}$ and $\E
f_{n}=O(x^{(n+2)/2})$ it follows that $k_{n+1}$ is well defined
and that $\E
f_{n}-(3/2)ck_{1}(n+3)k_{n+1}x^{(n+2)/2}=O(x^{(n+3)/2}).$ In view
of (\ref{8.1}) this implies that $\E f_{n+1}=O(x^{(n+3)/2})$ and
thus completes the proof. \qed

Easy calculations show that the coefficients $\{k_{n}\}_{n\geq 2}$
satisfy the
recursion%
\begin{align}
k_{2i} =&\frac{1}{6(i+1)k_{1}}\bigg[ 2\frac{k_{2i-1}}{c}\left( a+b+(i-\frac{%
1}{2})a-(i^{2}-\frac{1}{4})\right)    \notag \\
&\quad\quad\quad\quad\quad\quad\quad\quad\quad -\sum_{j=1}^{i-1}(3+j)(2+2i-j)k_{j+1}k_{2i-j}\bigg],  \label{r1}\\
k_{2i+1} =&\frac{1}{3(2i+3)k_{1}}\bigg[ 2\frac{k_{2i}}{c}\left(
a+b+ia-i(1+i)\right)  -\frac{1}{2}(3+i)^{2}k_{i+1}^{2}  \notag \\
&\quad\quad\quad\quad\quad\quad\quad\quad\quad -\sum_{j=1}^{i-1}(3+j)(3+2i-j)k_{j+1}k_{2i-j+1}\bigg]. \label{r2}
\end{align}%
>From here it is readily seen that the radius of convergence of the power
series \eqref{7.2} is nil when $a<\frac{3}{2}$ and $b\in (-a,\frac{3}{4}-\frac{3}{2%
}a],$ firstly by showing inductively $k_{i}>0$ for $i\geq 2$ and
subsequently neglecting all quadratic terms in $k_{i}$ in \eqref{r1}, \eqref{r2} and proving the easy
estimate $k_{n+1}/k_{n}\geq -2(n-1)/(3k_{1}c)$ for sufficiently large $n$.
Hence the power series $f_{n}$ does not define a solution directly via $%
\lim_{n\rightarrow \infty }f_{n}(x)$ outside $x=0$. We conjecture this
remains to be the case for arbitrary parameter values as long as $a+b>0$.

We will show later (Corollary \ref{nu}) that every local solution of (0.1), (0.2) with the property $u(x)\leq x$ satisfies
\[
u^{(k)}(x)=f^{(k)}_n(x)+o(x^{(n+3)/2-k}),
\]
for $k\in\{0,1\}$ and $n=1$. Whether this
is true for $n>1$ or $k>1$ remains an open question.
\mysection{Nonexistence for $a+b<0$}
In this second part we shall deduce Proposition \ref{prop4} below
which will exclude the existence of any continuous solution to
(\ref{0}) for any $x_0>0$ under the assumption $a+b<0$ which is
complementary to the hypothesis of Proposition \ref{basicex}.\\ To
this end we first prove that any supposedly existing continuous
solution must satisfy $u'(x)\to 1$ as $x\to 0$. This property can
formally easily be guessed upon tracing the possible solution
behavior near $x=0$.
\begin{lem}\label{u1}
Suppose that for some $x_0>0$, the function $u\in C^0([0,x_0]) \cap C^2((0,x_0))$ is a solution of
(\ref{0}), (\ref{0.1}). Then
\be{to 1}
    \lim_{x\searrow 0} u'(x)=1.
\ee
\end{lem}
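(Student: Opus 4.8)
The plan is to reformulate \eqref{0} as a Riccati equation for the deviation $w:=u'-1$ and to trap $w$ between two vanishing barriers at the origin. Since $u''=w'$, the equation becomes
\be{plan1}
 x^2 w' = -cw^2 + axw + g(x), \qquad g(x):=ax+bu(x).
\ee
The decisive structural point is that, because $u$ is continuous on $[0,x_0]$ with $u(0)=0$, the inhomogeneity satisfies $g(x)\to0$ as $x\searrow0$; this is exactly where the singular initial condition \eqref{0.1} enters. Consequently, as long as $|w|$ stays above a fixed level $\delta>0$, the quadratic term $-cw^2$ dominates: for every $\delta>0$ there is $x_\delta\in(0,x_0)$ with
\[
 x^2 w' \le -\tfrac{c}{2}\,w^2 \qquad\text{whenever } |w(x)|\ge\delta \text{ and } 0<x<x_\delta .
\]
Indeed $|axw+g|\le\tfrac{c}{2}w^2$ there, since $|a|x|w|\le\tfrac{c}{4}w^2$ as soon as $|a|x\le\tfrac{c}{4}|w|$ (which holds for small $x$ because $|w|\ge\delta$), while $|g|\le\tfrac{c}{4}\delta^2$ for $x$ small. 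No a priori bound on the size of $w$ is needed for this estimate.

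I would first prove $\limsup_{x\searrow0}w\le0$. Suppose instead that $w(x_n)>\delta$ for some $\delta>0$ along a sequence $x_n\searrow0$, and pick $x_n<x_\delta$. On $\{w\ge\delta,\ x<x_\delta\}$ the estimate gives $w'<0$ strictly, so a no-crossing argument (at any first return to the level $\delta$ one would need $w'\ge0$, contradicting $w'<0$) shows that $w>\delta$ holds on all of $(0,x_n]$. There $\tfrac{d}{dx}\tfrac1w\ge\tfrac{c}{2x^2}$, and integrating from $x$ to $x_n$ yields $\tfrac1{w(x)}\le\tfrac1{w(x_n)}-\tfrac{c}{2}\bigl(\tfrac1x-\tfrac1{x_n}\bigr)$, whose right-hand side tends to $-\infty$ as $x\searrow0$. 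This contradicts $w>0$. Hence for every $\delta>0$ one has $w\le\delta$ near $0$, i.e. $\limsup_{x\searrow0}w\le0$.

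Symmetrically I would prove $\liminf_{x\searrow0}w\ge0$, but now integrating \emph{to the right}. If $w(x^*)<-\delta$ at some small $x^*<x_\delta/2$, then $w'<0$ on $\{w\le-\delta,\ x<x_\delta\}$ together with the same no-crossing argument forces $w<-\delta$ and decreasing for $x>x^*$. The inequality $\tfrac{d}{dx}\tfrac1w\ge\tfrac{c}{2x^2}$ then drives $\tfrac1w\uparrow0$, so $w\to-\infty$ at a finite point $x^\dagger$ with $\tfrac1{x^\dagger}=\tfrac1{x^*}-\tfrac{2}{c|w(x^*)|}$; for $x^*$ small this blow-up point lies in $(0,x_\delta)\subset(0,x_0)$, contradicting the assumed existence of the solution on all of $(0,x_0)$. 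Combined with the previous step this gives $w(x)\to0$, which is \eqref{to 1}.

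The main obstacle is the asymmetry between the two bounds: the quadratic term always pushes $w$ in one direction, so the contradiction must be extracted by \emph{decreasing} $x$ for the upper bound (where $w$ would be forced to $0^-$) but by \emph{increasing} $x$ for the lower bound, where global existence is exploited through finite-time blow-up. Care is also needed to justify that the comparison region $\{|w|\ge\delta\}$ is not left prematurely (the no-crossing step) and that both the estimate $x^2w'\le-\tfrac{c}{2}w^2$ and the blow-up point remain inside the admissible range $x<x_\delta$; restricting to $x^*<x_\delta/2$ secures $x^\dagger<x_\delta$.
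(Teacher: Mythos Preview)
Your argument is correct and follows the same Riccati-comparison strategy as the paper: both isolate $w=u'-1$, observe that $-cw^2$ dominates the remaining terms once $|w|\ge\delta$ near the origin, obtain a differential inequality of the form $x^2w'\le-\tfrac{c}{2}w^2$, and extract a contradiction by explicit integration. The paper first substitutes $t=-1/x$, which converts the inequality into the autonomous form $V'\le-\tfrac{c}{3}V^2$ and makes the comparison with the explicit solution of $Y'=-\tfrac{c}{3}Y^2$ slightly cleaner, but this is cosmetic. One genuine difference in execution is the lower bound: the paper integrates \emph{toward} $x=0$ and shows the Riccati lower barrier forces $w>-\eps/2$, contradicting $w\le-\eps$ directly without invoking global existence, whereas you integrate \emph{away} from the origin and use finite-time blow-up against existence on $(0,x_0)$; both are valid.

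One small correction: the assertion that $x^*<x_\delta/2$ alone secures $x^\dagger<x_\delta$ is not quite right. From $\tfrac{1}{x^\dagger}=\tfrac{1}{x^*}-\tfrac{2}{c|w(x^*)|}>\tfrac{1}{x^*}-\tfrac{2}{c\delta}$ you get $\tfrac{1}{x^\dagger}>\tfrac{2}{x_\delta}-\tfrac{2}{c\delta}$, and for this to exceed $\tfrac{1}{x_\delta}$ you also need $x_\delta<\tfrac{c\delta}{2}$. This is harmless---just fold it into the choice of $x_\delta$---but should be stated.
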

\proof
  Letting $v:=u'-1$ we can rewrite (0.1) as
  \begin{eqnarray*}
     u'&=&v+1,\\ x^2 v'&=&ax(v+1)+bu-cv^2.
  \end{eqnarray*}
  Let $X(t):=-t^{-1}$ for $t<0$. Then $X'(t)=  t^{-2}$ and $X(t)\searrow 0$ as  $t\to-\infty$.
  We next introduce $U(t):=u(X(t))$ and $V(t):=v(X(t))$ for $t<0$. Then  the pair $(U,V)$ solves
  the  following system of differential equations
  \begin{eqnarray*}
    U'&=&  t^{-2}(V+1),\\ V'&=&-at^{-1}(V+1)+bU-cV^2.
  \end{eqnarray*}
  By assumption, we have $U(t)\to 0$ as $t\to-\infty$ and thus
  \be{Vprime}
    V'(t)=p(t)+q(t)V(t)-cV^2(t)
  \ee
  with
  \be{pq}
  p(t)\to 0 \mbox{ and }  q(t)\to 0 \mbox{ as } t\to -\infty.
  \ee
    We wish to show that {if $V(t)$ is defined for all $t\le -x_0^{-1}$} then $V(t)\to 0$ as $t\to-\infty$. The proof proceeds in several steps.\\
 i) Given $\eps>0$ there is $T<-x_0^{-1}$ such that
  $|p(t)|<\eps^2c/3$ and $|q(t)|<\eps c/3$ for all $t\le T$, by virtue of \eqref{pq}.
  \\
 ii) Consider $t_0\le T$. We claim that if $|V(t)|\ge\eps$ for
all $t\le t_0$ then \be{solution}
    V(t)\ge \frac{1}{V(t_0)^{-1}+\frac{c}{3}(t-t_0)} \mbox{ for all } t\le t_0.
  \ee
  while defined. To this end note that \eqref{Vprime} and i) yield
  \be{quadratic}
    V'(t)\le-\frac{c}{3}V^2(t) \qquad \mbox{ if } t\le T \mbox{ and } |V(t)|\ge \eps.
  \ee
  By the comparison theorem for ordinary differential equations we conclude that
  \bas
    V(t)\ge Y(t) \mbox{ for }t\le t_0,
  \eas
  where $Y$ solves the differential equation $Y'=-\frac c3 Y^2$ with $Y(t_0)=V(t_0)$.
  On solving for $Y$ we obtain \eqref{solution}.\\
 iii) Now we prove that there exists $t_1\le T$ such that
$V(t_1)> -\epsilon$. Suppose to the contrary that
$V(t)\le-\epsilon$ for all $t\le T$. Then, \eqref{solution} gives
$V(t)\ge-\eps/2$ for $t<t_0-\frac{6}{c\eps}$, yielding the desired
contradiction.

 Next we show that $V(t)> -\eps$ \emph{for all} $t\le t_1$.
Arguing by contradiction, suppose this is not the case. Then there
is $t_2$ such that $-\infty<t_2=\sup\{t\le t_1:V(t)\le -\eps\}<
t_1$. By continuity we have $V(t_2)= -\eps$. From
\eqref{quadratic} we obtain $V'(t_2)<0$ which is in conflict with
$V(t_2)=-\eps$ and $V(t)>-\eps$ for $t\in(t_2,t_1)$.

iv) Finally, we show that $V(t)\le\epsilon$ for all $t\le T$. If
not, there is $t_3\le T$ such that $V(t_3)> \epsilon$ and we have
$t_4:=\sup\{t\le t_3:V(t)\le \epsilon\}<t_3$. The same argument as
in part iii) shows that $t_4=-\infty$ and therefore $V(t)>\eps$
for all $t\le t_3$. From \eqref{solution} we now obtain
$V(t)\to\infty$ for $t\searrow t_3-\frac3cV(t_3)^{-1}$
 Therefore, $V(t)$ is not defined for some $t\le
-x_0^{-1}$ which is inconsistent with differentiability of $U$ in
$(-\infty,0)$.\abs

Since $\epsilon$ was arbitrary this completes the proof of the
lemma.\qed

It is now possible to rule out local existence of a continuous
solution of (\ref{0}), (\ref{0.1}) under the condition that $a+b$ be strictly
negative.
\begin{prop}\label{prop4}
  Suppose that $a+b<0$.
  Then for each $x_0>0$, the problem (\ref{0}), (\ref{0.1})
  does not possess any solution $u$ in $[0,x_0]$.
\end{prop}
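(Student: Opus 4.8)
The plan is to argue by contradiction, taking Lemma~\ref{u1} as the point of departure. Suppose $a+b<0$ and that a solution $u$ on $[0,x_0]$ exists, and set $\beta:=-(a+b)>0$ and $w:=u'-1$. Then $w\in C^1((0,x_0))$ and, by Lemma~\ref{u1}, $w(x)\to 0$ as $x\searrow 0$. First I would record the companion fact that $u(x)/x\to 1$: since $u(x)=\int_0^x u'(s)\,ds$ and $u'\to 1$ near the origin, the average of $u'$ over $(0,x)$ tends to $1$. Consequently $W:=u-x=\int_0^x w$ satisfies $W(x)=o(x)$, and likewise $axw=o(x)$ as $x\searrow 0$.

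The heart of the argument is a one-sided differential inequality for $w'=u''$. Rewriting \eqref{0} in terms of $w$ and substituting $bu=bx+bW$ yields the identity
\be{w-ident}
x^2 w'=(a+b)x+axw+bW-cw^2\qquad\text{in }(0,x_0).
\ee
The crucial observation is that one should \emph{not} attempt to estimate $u''$ directly---near the singularity it may well be large---but instead exploit the signs on the right-hand side. Since $axw+bW=o(x)$, there is $x_*\in(0,x_0)$ with $|axw+bW|\le\tfrac{\beta}{2}x$ for $0<x<x_*$; dropping the nonpositive term $-cw^2$ then gives
$$x^2 w'\le-\beta x+\tfrac{\beta}{2}x=-\tfrac{\beta}{2}x,\qquad\text{i.e.}\qquad w'(x)\le-\frac{\beta}{2x}\quad\text{for }0<x<x_*.$$

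Finally I would integrate this inequality from $x$ to $x_*$ to obtain $w(x)\ge w(x_*)+\tfrac{\beta}{2}\ln\tfrac{x_*}{x}$, whence $w(x)\to+\infty$ as $x\searrow 0$. This contradicts $w(x)\to 0$ and rules out any solution when $a+b<0$. The main obstacle I anticipate is the bookkeeping required to confirm that both error terms $axw$ and $bW$ are genuinely $o(x)$ (which rests on $w\to 0$ together with $W=\int_0^x w$), since this is precisely what allows the sign of $a+b$ to dominate in \eqref{w-ident}; once that is in place the contradiction is immediate and no delicate control of $u''$ is needed.
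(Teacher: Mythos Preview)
Your proof is correct and follows essentially the same route as the paper's: both arguments use Lemma~\ref{u1} to conclude that $u'-1$ and $(u-x)/x$ vanish at the origin, rewrite (\ref{0}) so that the right-hand side is dominated by $(a+b)x$ near $x=0$ (your identity (\ref{w-ident}) is exactly the paper's computation with $\varphi_1=w$ and $\varphi_2=W/x$), drop the nonpositive term $-cw^2$, and integrate the resulting inequality $u''\le -\tfrac{\beta}{2x}$ to force $u'(x)\to+\infty$, contradicting Lemma~\ref{u1}. The only cosmetic differences are notation and your explicit remark that $u(x)=\int_0^x u'$, which the paper leaves implicit.
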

\proof
  Suppose that such a solution exists for some $x_0>0$. Then from Lemma \ref{u1} we know that
  $u$ actually belongs $C^1([0,x_0])$ with $u'(0)=1$, and hence the functions $\varphi_1$ and $\varphi_2$ defined by
  \bas
    \varphi_1(x):=u'(x)-1, \quad x\in (0,x_0),
    \qquad \mbox{and} \qquad
    \varphi_2(x):=\frac{u(x)-x}{x}, \quad x\in (0,x_0),
  \eas
  satisfy $\varphi_1(x)\to 0$ and $\varphi_2(x)\to 0$ as $x\to 0$.
  Since $a+b<0$, we can thus find $\bx \in (0,x_0)$ such that
  \bas
    a+b + a\varphi_1(x)+b\varphi_2(x) \le \frac{a+b}{2} \qquad \mbox{for all } x\in (0,\bx).
  \eas
  Therefore, (\ref{0}) shows that
  \bas
    x^2 u''(x) &=& axu'(x) +bu(x) -c(u'(x)-1)^2 \\
    &\le& axu'(x) +bu(x) \\
    &=& ax(1+\varphi_1(x)) + bx(1+\varphi_2(x)) \\
    &=& \Big(a+b+a\varphi_1(x)+b\varphi_2(x)\Big) \cdot x \\
    &\le& -\delta x \qquad \mbox{for all } x\in (0,\bx)
  \eas
  holds with $\delta:=-\frac{a+b}{2}>0$. By integration we find that
  \bas
    u'(\bx) - u'(x) \le -\delta \ln \frac{\bx}{x} \qquad \mbox{for all } x\in (0,\bx).
  \eas
  This implies that $u'(x)\to + \infty$ as $x\to 0$ and thereby contradicts Lemma
  \ref{u1}.
\qed
\mysection{Monotonicity and concavity properties of solutions}
In this section we assume $a+b>0$ and we study monotonicity and convexity properties of
solutions to \eqref{0}, whose existence was established in Section 1.\abs
The following lemma is the key to establishing monotonicity, concavity, and ultimately also uniqueness in a certain restricted class of solutions.
\begin{lem}\label{one root}
  Consider a nonconstant function $y\in C^0([0,\infty)) \cap
  C^2((0,\infty))$ satisfying \be{basic}
  x^2y''(x) = f(x)y'(x)+g(x,y(x)),\ee for some continuous functions
  $f$ and $g$. Suppose there is a constant $y^*\in [-\infty,\infty]$
  such that for all $x>0$ one has $g(x,y)>0$ for $y>y^*$ and $g(x,y)<0$
  for $y<y^*$. Then there is at most one $x_0\in
  (0,\infty)$ such that $y'(x_0)=0$. If such $x_0$ exists then one,
  and only one, of the following two alternatives is possible:
  Either
  \bas
    \mbox{$y'(x)<0$ for $x<x_0$, $y'(x)>0$ for $x>x_0$, and $y(x)>y(x_0)>y^*$ for all $x\neq x_0$,}
  \eas
  or
  \bas
    \mbox{$y'(x)>0$ for $x<x_0$, $y'(x)<0$ for $x>x_0$, and $y(x)<y(x_0)<y^*$ for all $x\neq x_0$.}
  \eas
\end{lem}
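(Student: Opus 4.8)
The plan is to read off the sign of $y''$ at a critical point directly from the equation and then to propagate this local information across the whole half-line by a ``no return'' argument, deducing global monotonicity and uniqueness of the critical point as a by-product.

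First I would note that if $y'(x_0)=0$ for some $x_0\in(0,\infty)$, then evaluating \eqref{basic} at $x_0$ gives $x_0^2 y''(x_0)=g(x_0,y(x_0))$, so the sign of $y''(x_0)$ coincides with the sign of $g(x_0,y(x_0))$. By the hypothesis on $g$ this yields $y''(x_0)>0$ when $y(x_0)>y^*$ and $y''(x_0)<0$ when $y(x_0)<y^*$; in particular every critical point is a \emph{strict} local extremum, a minimum in the first case and a maximum in the second. This is the whole local input; everything else is propagation.

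Next I would globalise. Take the case $y(x_0)>y^*$, the case $y(x_0)<y^*$ being symmetric. Since $y''(x_0)>0$ and $y'(x_0)=0$, we have $y'<0$ just left of $x_0$ and $y'>0$ just right of it. To see $y'>0$ on all of $(x_0,\infty)$, suppose $y'$ vanishes again to the right and let $x_1:=\inf\{x>x_0:y'(x)=0\}$; then $x_1>x_0$, $y'(x_1)=0$, and $y'>0$ on $(x_0,x_1)$. Monotonicity on this interval forces $y(x_1)>y(x_0)>y^*$, whence $y''(x_1)=g(x_1,y(x_1))/x_1^2>0$; but $y'>0$ on $(x_0,x_1)$ with $y'(x_1)=0$ gives, via the difference quotient $-y'(x)/(x_1-x)<0$, the bound $y''(x_1)\le 0$, a contradiction. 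Hence $y'$ never vanishes on $(x_0,\infty)$, and being positive near $x_0$ it stays positive there. A mirror argument with $\sup\{x\in(0,x_0):y'(x)=0\}$ gives $y'<0$ on all of $(0,x_0)$. Thus $x_0$ is the \emph{unique} zero of $y'$, $y$ is strictly decreasing on $(0,x_0)$ and strictly increasing on $(x_0,\infty)$, and $y(x)>y(x_0)>y^*$ for every $x\neq x_0$ — exactly the first alternative. The case $y(x_0)<y^*$ yields the second, and the two are mutually exclusive since they prescribe opposite monotonicity near $x_0$ and opposite position of $y(x_0)$ relative to $y^*$.

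The delicate point, which I expect to be the main obstacle, is the borderline case $y(x_0)=y^*$ with $y^*$ finite. There continuity of $g$ forces $g(x_0,y^*)=0$, so $y''(x_0)=0$ and the sign argument collapses; the critical point need not even be an extremum. Continuity of $g$ alone does not exclude this: for $f\equiv 0$ and $g(x,y)=x^2 y^{1/3}$ (so $y^*=0$), the nonconstant function $y(x)=6^{-3/2}(x-x_0)^3$ solves \eqref{basic}, has its only critical point at $x_0$ with $y(x_0)=y^*$, and satisfies neither alternative. What rescues the statement is that in every application here $g$ is affine (hence locally Lipschitz) in $y$, so the data $y(x_0)=y^*$, $y'(x_0)=0$ determine the solution uniquely; since $y\equiv y^*$ already solves \eqref{basic} with those data, uniqueness forces $y\equiv y^*$, contradicting that $y$ is nonconstant. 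This excludes $y(x_0)=y^*$ and leaves only the two nondegenerate alternatives. When $y^*=\pm\infty$ the borderline case cannot occur at all, since then $g$ has a constant sign and every critical point falls into one of the cases already treated.
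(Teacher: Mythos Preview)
Your argument is essentially the paper's: read off the sign of $y''$ at a critical point from \eqref{basic}, then use a first-return contradiction to propagate the sign of $y'$, with the borderline $y(x_0)=y^*$ excluded via ODE uniqueness. The paper argues the left interval $(0,x_0)$ explicitly and leaves the right to symmetry, while you do the reverse, but the mechanism is identical.

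Your treatment of the borderline case is actually sharper than the paper's. The paper simply writes ``by an ODE uniqueness argument, $y(x_0)=y^*$ and $y'(x_0)=0$ implies $y\equiv y^*$,'' without noting that mere continuity of $g$ does not guarantee uniqueness. Your counterexample $y''=y^{1/3}$ is valid and shows the lemma, as stated with only continuous $g$, is false; your observation that in every application $g$ is affine in $y$ (hence Lipschitz, hence uniqueness holds) is precisely the hidden hypothesis the paper is tacitly using.
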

\proof
We first note that because of continuity  of $g$ we have  $g(x,y^*)=0$
for all $x>0$ whenever $y^*$ is finite. By an ODE uniqueness argument,
$y(x_0)=y^*$ and  $y'(x_0)=0$ implies $y(x)\equiv y^*$.
Therefore, if $y(x)$ is not constant and $y'(x_0)=0$ then
$y(x_0)\ne y^*$.\\
Now suppose that $y(x_0)>y^*$. Then from \eqref{basic} it follows
that $y''(x_0)>0$, hence $y'(x)<0$ for $x<x_0$ sufficiently close to $x_0$.
Arguing by contradiction, let us suppose that there exists $0<x_1<x_0$ such that $y'(x_1)\ge 0$. Then
there is $x_2\in [x_1,x_0)$ such that
\be{flat} y'(x_2)=0, \ y'(x)<0 \hbox{ for }x_2<x<x_0, \ee
which implies $y''(x_2)<0$ and also $y(x)>y(x_0)>y^*$
for $x_2\le x <x_0$. On the other hand, \eqref{basic} together with $y(x_2)>y^*$ and $y'(x_2)=0$ entails that
$y''(x_2)>0$, yielding the desired contradiction.
Therefore, $y'(x)<0$ for all $x\in (0,x_0)$. The proof of
$y'(x)>0$ for $x>x_0$ follows the same lines. Finally, the case
$y(x)<y^*$ can be reduced to the case $y(x)>y^*$ by the
transformation $y\mapsto -y$, $y^*\mapsto -y^*$.
\qed
We now apply this to derive some monotonicity properties of solutions.
Here in order to abbreviate notation, we call a function $\phi:
[0,\infty)\to \mathbb R$ {\em eventually monotonic} if it is monotonic on $[x_0,\infty)$ for some $x_0\ge 0$.
\begin{lem}\label{monotony}
 Let $u$ be a nonconstant solution of (\ref{0}) on $(0,\infty)$.\\
  i) \ If $b\ne 0$ and $u$ is bounded and eventually monotonic, then $u(x)$ converges to the unique
  stationary solution $\hat u=c/b$ as $x\to\infty$. If $b=0$ and $u$ is eventually monotonic, then $u$ is unbounded.\\
  ii) \ If $b>0$ and $u\geq 0$, then one of the following alternatives occurs: Either
  \bas
    \mbox{$u'(x)<0$ for all $x>0$, $u(x)>c/b$ for all $x>0$ and $u(x)\to c/b$ as $x\to\infty$,}
  \eas
  or
  \bas
    & & \mbox{$u'(x)>0$ for all $x>0$, and either $u(x)<c/b$ for all $x>0$ and
    $u(x)\to c/b$ as $x\to\infty$,}\\
    & & \mbox{or $u$ is unbounded,}
  \eas
  or finally
  \bas
    & & \mbox{there exists a unique $x_0>0$ such that $u'(x_0)=0$, and we have $u''(x_0)>0$,} \\
    & & \mbox{$u(x)>u(x_0)>c/b$ for all $x\ne x_0$, $u'(x)<0$ for $x<x_0$, $u'(x)>0$ for $x>x_0$,} \\
    & & \mbox{and $u$ is unbounded.}
  \eas
  iii) \ If $b\le 0$ and $u\geq 0$ then $u'(x)>0$ for all $x>0$ and $u$ is unbounded.
\end{lem}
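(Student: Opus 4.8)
The plan is to deduce the whole lemma from part (i), which carries the analytic weight; parts (ii) and (iii) then follow by combining (i) with the sign of $u''$ at critical points and, for (ii), with Lemma \ref{one root}. The starting point is the identity obtained by expanding the square in \eqref{0},
\[
x^2 u'' = \big(ax + 2c - cu'(x)\big)u' + \big(bu - c\big),
\]
which is exactly of the form \eqref{basic} with $f(x)=ax+2c-cu'(x)$ continuous and $g(x,y)=by-c$. Hence for $b>0$ the threshold is $y^*=c/b$ and Lemma \ref{one root} applies verbatim. A second, decisive observation is that at any critical point $x_0$ (where $u'(x_0)=0$) the equation collapses to $x_0^2u''(x_0)=bu(x_0)-c$, so the sign of $u''(x_0)$ is the sign of $bu(x_0)-c$.

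For part (i), suppose $u$ is bounded and eventually monotonic; then $u(x)\to L$ for a finite $L$, and $u'$ is one-signed and integrable at infinity, so $\liminf_{x\to\infty}|u'(x)|=0$. The crux is to upgrade this to $u'(x)\to 0$. To that end I would differentiate \eqref{0} once and evaluate wherever $u''=0$; invoking $a+b>0$, this gives $x^2u'''=(a+b)u'$, so at such a point the sign of $u'''$ matches that of $u'$. Consequently $u'$ can have no interior local maximum at a positive value (there $u'''>0$ forces a strict local minimum instead), and symmetrically no local minimum at a negative value; since $\liminf|u'|=0$, any oscillation of $u'$ away from $0$ would manufacture precisely such a forbidden extremum, so $u'\to 0$ (and in particular $u'$ is bounded). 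Having secured $u'\to 0$ and $u\to L$, I would substitute $s=\ln x$ and set $\psi(s):=xu'(x)$, which turns \eqref{0} into $\psi_s=(1+a)\psi+(bu-c)-c((u')^2-2u')$, a first-order equation whose inhomogeneity $(bu-c)-c((u')^2-2u')$ tends to $bL-c$. Writing $\beta:=bL-c$, the scalar equation $\psi_s=(1+a)\psi+\beta+o(1)$, together with the facts that $\psi$ keeps the sign of $u'$ and is integrable in $s$ at infinity (so $\liminf|\psi|=0$), is incompatible with $\beta\ne 0$ for every value of $1+a$, since a nonzero $\beta$ would drive $\psi$ off the half-line to which it is confined. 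Thus $L=c/b$ when $b\ne 0$; and when $b=0$ the relation $\beta=-c\ne 0$ is itself contradictory, so a bounded eventually monotonic solution cannot exist, i.e.\ such a solution is unbounded.

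Part (iii) is then immediate. If $b\le 0$ and $u\ge 0$, then at any critical point $x_0$ we have $bu(x_0)-c\le -c<0$, so $u''(x_0)<0$ and every critical point is a strict local maximum; two of them would enclose a local minimum, which is impossible, so there is at most one. Were one to exist, $u$ would be nonincreasing on $[x_0,\infty)$ and, being nonnegative, bounded and eventually monotonic, contradicting part (i) (for $b<0$ it would force $u\to c/b<0$, for $b=0$ unboundedness). Hence $u'$ never vanishes; the sign $u'<0$ is ruled out by the same boundedness argument, leaving $u'>0$ throughout, and boundedness is again excluded by (i), so $u$ is unbounded.

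Part (ii) follows by applying Lemma \ref{one root} with $y^*=c/b$ and sorting the cases against $u\ge 0$ and the dichotomy of (i). If $u'$ never vanishes it is one-signed: $u'<0$ gives a bounded decreasing solution converging, by (i), to $c/b$ from above (first alternative), while $u'>0$ gives an increasing solution that, by (i), either rises to $c/b$ from below or is unbounded (second alternative). If $u'$ vanishes at some $x_0$, Lemma \ref{one root} leaves only the minimum configuration: the maximum configuration would make $u$ eventually decreasing with $u<c/b$, hence convergent to $c/b$ from below by (i), which is impossible; in the surviving case $u(x)>u(x_0)>c/b$ with $u''(x_0)>0$, and since $u$ is increasing on $[x_0,\infty)$ it cannot be bounded (else it would converge downward to $c/b$), giving the third alternative. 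The main obstacle throughout is the step $u'(x)\to 0$ in part (i): controlling the a priori possibly unbounded, oscillating derivative is exactly where $a+b>0$ enters, through the differentiated equation, and the subsequent linear asymptotic analysis of $\psi$ relies essentially on its one-sided confinement.
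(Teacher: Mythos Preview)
Your argument is correct. Parts (ii) and (iii) coincide with the paper's proof: the paper too rewrites the equation in the form of Lemma~\ref{one root} with $g(x,y)=by-c$, applies that lemma (with $y^*=c/b$ for $b>0$ and $y^*=\infty$ for $b\le 0$), and uses part (i) to rule out the incompatible configurations.

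For part (i) you take a genuinely different route. The paper substitutes $x=e^t$, $\tilde u(t)=u(e^t)$, obtaining $\tilde u''=(a+1)\tilde u'+b\tilde u-c(e^{-t}\tilde u'-1)^2$; it then asserts that since $\tilde u$ is bounded and eventually monotone one has $\tilde u'\to 0$, whereupon the transformed equation gives $\tilde u''\to bl-c$, which contradicts the convergence of $\tilde u'$ unless $bl=c$. Your $\psi$ is exactly the paper's $\tilde u'$, so the two arguments merge at the end; the difference is in how one gets there. You first establish $u'(x)\to 0$ by applying the critical-point analysis (in effect Lemma~\ref{one root}) to the \emph{differentiated} equation --- this is precisely where $a+b>0$ enters --- and only afterwards pass to the logarithmic variable and study the first-order equation for $\psi$. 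The paper's path is much shorter and makes no use of $a+b>0$ in (i), but its pivotal step ``$\tilde u$ bounded monotone $\Rightarrow \tilde u'\to 0$'' is not a general fact about $C^2$ functions and is asserted without further comment; your longer argument supplies a complete justification at the price of invoking the section-wide hypothesis $a+b>0$.

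One minor point of phrasing: ``a nonzero $\beta$ would drive $\psi$ off the half-line'' is literally what happens when $\beta<0$; when $\beta>0$ the contradiction is rather with $\liminf|\psi|=0$ (if $1+a\ge 0$ then eventually $\psi_s\ge \beta/2>0$, so $\psi$ increases away from $0$; if $1+a<0$ then $\psi$ is attracted to the positive equilibrium $\beta/|1+a|$). Making these cases explicit would complete the argument.
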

\proof
i) \ The substitution $x(t)=e^t, \tilde{u}(t)=u(x(t))$ transforms equation (\ref{0})
into
\be{udot}
  \tilde{u}'' = (a+1)\tilde{u}' + b\tilde{u} - c(e^{-t}\tilde{u}' - 1)^2.
\ee Being bounded and eventually monotonic, $\tilde{u}$ has a
limit $l$ as $t\to \infty$ and consequently
$\lim_{t\to\infty}\tilde{u}'(t)= 0$. From \eqref{udot} it now
follows that
$\lim_{t\to\infty}\tilde{u}''(t)= bl-c$. If $bl-c\ne 0$ then
$\lim_{t\to\infty}\tilde{u}''(t)\ne 0$ which is inconsistent with the
convergence of $\tilde{u}'$. This proves $bl-c=0$. For $b=0$ this is a
contradiction with $c>0$, for $b\ne 0$ it yields $l=c/b$.\abs
ii) \ If $u'(x)<0$
for all $x$ then $u$ is bounded and, by i), tends to $c/b$ as
$x\to\infty$ which is possible only if $u(x)>c/b$ for all $x$. If $u'(x)>0$ for all $x$
then thanks to monotonicity, $u(x)$ approaches a limit in $[0,\infty]$ as $x\to\infty$.
If this limit is finite it has to equal $c/b$ by virtue of i),
and in the remaining case $u$ is unbounded.\\
Suppose now there is $x_0>0$ such that $u'(x_0)=0$. Lemma \ref{one root} applied to equation \eqref{0} with $y\equiv u\ge 0$, $g(x,y):=by-c$ and $y^*:=c/b$ yields two alternatives,
the first of which is stated in part ii). The second alternative is not possible since it implies $0\le u\le c/b$ but at the same time $u'(x)<0$ for $x>x_0$ which means that
$u\not\to c/b$ as $x\to \infty$. A bounded solution not converging to $c/b$ contradicts part i).\abs
iii) \ If $u'$ is not positive everywhere then Lemma \ref{one root} applied to equation \eqref{0} with
$y\equiv u\ge 0$, $g(x,y):=by-c$ and $y^*:=\infty$ implies that there is $x_0$ such that
$u(x) < u(x_0)$ and $u'(x)<0$ for $x>x_0$.
Therefore $u$ is bounded and eventually monotonic. This contradicts i) when $b=0$. For $b<0$,
i) dictates that $u$ should converge to $c/b$ as $x\to\infty$, which contradicts $u\ge 0$ since $c/b<0$.
\qed
\begin{prop}\label{uprimeZero}
Suppose that $a+b>0$ and that $u(x)\leq x$ is a solution of
\eqref{0}, \eqref{0.1} on $(0,x_0)$ with some $x_0>0$. Then there
exists $x_1\in (0,x_0)$ such that $u'(x)>0$, $u''(x)<0$ and
$u'''(x)>0$ for all $x\in (0,x_1)$. Furthermore, in this case we
have \be{asympt}
    \lim_{x\to 0}\frac{u'(x)-1}{\sqrt{x}}=-\sqrt{\frac{a+b}{c}}.
\ee

\end{prop}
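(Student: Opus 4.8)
The plan is to obtain the three sign conditions successively, each time from the sign of a suitable ``velocity'' at the critical points of the preceding derivative, and to prove \eqref{asympt} by a phase-line analysis. Write $v:=u'-1$ and $w:=u-x$. By Lemma \ref{u1} we have $v(x)\to 0$ as $x\searrow0$; in particular $u'\to 1$, so $u'>0$ near $0$ (the first assertion). Since $v\to0$, the average $w(x)/x=\frac1x\int_0^x v\to0$, whence $w=o(x)$, while the hypothesis $u\le x$ gives $w\le0$. Substituting $u'=1+v$, $u=x+w$ into \eqref{0} gives $x^2v'=(a+b)x+axv+bw-cv^2$.

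Set $K:=\sqrt{(a+b)/c}$ and $\psi:=v/\sqrt x$. A direct computation turns the last identity into $x^{3/2}\psi'=(a+b)-c\psi^2+R$ with $R=(a-\tfrac12)v+bw/x\to0$, and after the time change $\tau=-2x^{-1/2}$ (so $\tau\to-\infty$ as $x\searrow0$) this becomes the scalar equation $\dot\psi=(a+b)-c\psi^2+R$, whose limiting autonomous field $G(\psi)=(a+b)-c\psi^2$ has exactly the roots $\pm K$. I would read off its phase line: (a) since $G(0)=a+b>0$, for $\tau$ sufficiently negative $\psi$ can cross $0$ only upward, hence $\psi$ is of one sign near $\tau=-\infty$; (b) the case ``$\psi>0$ near $0$'' is impossible, since it forces $v>0$ and thus $w=\int_0^x v>0$, contradicting $w\le0$; so $\psi<0$ near $0$. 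Two one-sided barriers then close the argument: at the level $-(K+\varepsilon)$ one has $G(-(K+\varepsilon))=-c(2K\varepsilon+\varepsilon^2)<0$, so $\psi$ cannot drop below it and $\liminf\psi\ge-K$; on $[-K+\eta,0)$ one has $G\ge c(2K\eta-\eta^2)>0$, so $\psi$ can neither linger there (it would be driven to $-\infty$) nor re-enter from below, giving $\limsup\psi\le-K$. Hence $\psi\to-K$, which is \eqref{asympt}. I expect this paragraph to be the main obstacle: the reduced field has two equilibria, the nonlocal term $bw$ must be controlled, and it is precisely the constraint $u\le x$ that selects the negative root $-K$ and rules out oscillation.

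For $u''<0$, differentiate \eqref{0} once to obtain $x^2u'''=\bigl[(a-2)x-2cv\bigr]u''+(a+b)u'$. At any zero of $u''$ this reduces to $x^2u'''=(a+b)u'>0$ (because $u'\to1$ and $a+b>0$), so near $0$ the function $u''$ crosses zero only upward and is therefore eventually of one sign. That sign cannot be positive: $u''>0$ on an interval $(0,\delta)$ would make $u'$ strictly increasing with $u'(0^+)=1$, forcing $u'>1$, i.e. $v>0$, contrary to the previous paragraph. Hence $u''<0$ near $0$.

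Finally I would show $u''\to-\infty$ and deduce $u'''>0$. First, $u''=v'$ is unbounded below, for $v'\ge-M$ on $(0,\delta)$ would yield $v(x)\ge-Mx$, contradicting $v\sim-K\sqrt x$; thus $\liminf_{x\to0}u''=-\infty$. Moreover $u''$ admits no local maximum of bounded value near $0$: at such a point $u'''=0$, and the relation above gives $\bigl[(a-2)x-2cv\bigr]u''=-(a+b)u'$, whose left side tends to $0$ (as $x,v\to0$ with $u''$ bounded) while the right side tends to $-(a+b)\neq0$ — impossible. Since a finite $\limsup$ together with $\liminf=-\infty$ would produce such bounded maxima, we conclude $u''\to-\infty$. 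Differentiating once more, at a zero of $u'''$ one gets $x^2u''''=u''\bigl[(2a+b-2)-2cu''\bigr]$, which is negative for small $x$ because $u''<0$ and $-2cu''\to+\infty$; hence $u'''$ crosses zero only downward and is eventually of one sign. It cannot be negative on a whole interval $(0,\delta)$, since then $u''$ would be decreasing and hence bounded above near $0$, against $u''\to-\infty$. Therefore $u'''>0$ near $0$, which completes the proof.
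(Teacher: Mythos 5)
Your route is genuinely different from the paper's (which first fixes the signs of $u''$ and $u'''$ via its Lemma \ref{one root} and only then extracts \eqref{asympt} from $u'-1=\int_0^x u''(\xi)d\xi\le xu''\le 0$), and most of it is sound: the reduction to $\dot\psi=(a+b)-c\psi^2+R$ with $R\to0$ is correct, as is the exclusion of $\psi>0$ via $u\le x$, and your second and third paragraphs are essentially right. The genuine gap is in your lower barrier. At the level $-(K+\varepsilon)$ the field $G$ is negative, but this does \emph{not} yield ``$\psi$ cannot drop below it''. In forward time ($\tau$ increasing, i.e.\ $x$ increasing) a negative field at a level is precisely what \emph{allows} downward crossings and forbids upward ones; read backward (toward $x\searrow 0$) it gives only a conditional statement: \emph{if} $\psi(\tau_0)>-(K+\varepsilon)$ at some $\tau_0$ in the region where $|R|$ is small, then $\psi$ stays above that level for all earlier $\tau$. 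What your argument never excludes is the scenario $\psi(\tau)\le -(K+\varepsilon)$ for \emph{all} sufficiently negative $\tau$, which is fully compatible with the sign of $G$ at that level and would ruin $\liminf\psi\ge -K$. Ruling it out needs an extra step, e.g.: on that set one has $\dot\psi\le (a+b)+\delta-c\psi^2\le -c'\psi^2$ for some $c'>0$ once $\delta$ is small, so by Riccati comparison $\psi$ would reach $-\infty$ within a fixed time of any starting point, contradicting the finiteness of $\psi=(u'-1)/\sqrt{x}$ on $(0,x_0)$ (alternatively: backward in time $\psi$ would increase monotonically to a limit $L\le -(K+\varepsilon)$ which must satisfy $G(L)=0$, impossible since the only roots are $\pm K$). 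Note also that your parenthetical ``it would be driven to $-\infty$'' is attached to the wrong barrier: on $[-K+\eta,0)$ the field is \emph{positive}, so nothing is driven to $-\infty$ there; the correct contradiction on that strip is that $\psi$ would be pushed upward across $0$, violating $\psi<0$.

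Two smaller repairs elsewhere. In the last paragraph, ``$u'''<0$ on $(0,\delta)$ makes $u''$ decreasing and hence bounded \emph{above}'' should read bounded \emph{below} ($u''(x)\ge u''(\delta/2)$ for $x<\delta/2$), and it is this lower bound that contradicts $u''\to-\infty$. Also, the dichotomy ``finite $\limsup$ or $u''\to-\infty$'' silently uses $\limsup u''\le 0$, i.e.\ the already-proved $u''<0$; with $\limsup=+\infty$ the bounded interior maxima you invoke need not exist, so this dependence should be made explicit. With the lower barrier patched, your proof is complete and is a legitimate alternative to the paper's: it obtains \eqref{asympt} before, rather than after, the convexity information, at the cost of a more delicate phase-line analysis of the perturbed Riccati equation.
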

\proof  We recall that by  Lemma \ref{u1} $u'(0)=1$ and that by
the remark following Proposition 1.1, $u(x)$ is $C^\infty$ for
$x>0$ . As an immediate consequence we must have $u'(x)>0$ for all
sufficiently small $x>0$. On differentiating the equation
\eqref{0} we obtain \be{u3}
    x^2u'''+2xu''= axu''+(a+b)u'-2c(u'-1)u''
     \qquad \mbox{on } (0,x_0).
\ee Lemma \ref{one root} applied to equation \eqref{u3} with
$y\equiv u'$, $g(x,y):=(a+b)y$, $y^*:=0$ implies that $u''$ has a
constant non-zero sign near $x=0$.  This, together with $u(x)\le
x$ and $u'(0)=1$, yields that necessarily $u''(x)<0$ for all
sufficiently small $x>0$.\\ We now differentiate equation
\eqref{u3} once more to obtain
\be{u4}
    x^2u''''=((a-4)x-2c(u'-1))u'''+(2a+b-2)u''-2c(u'')^2
    \qquad \mbox{on } (0,x_0).
\ee
Lemma \ref{one root} applied to equation \eqref{u4} with $y\equiv u''\le 0$, $g(x,y):=(2a+b-2)y-2cy^2$ and
$y^*:=(2a+b-2)/(2c)$ implies that $u'''(x)$ has a constant non-zero sign near $x=0$.
Arguing by contradiction, we suppose that $u'''<0$ near $x=0$.
Since $u''<0$, this implies that $L:=\lim_{x\searrow 0}u''(x)$ exists and is finite.
This however contradicts equation \eqref{0}, since on integrating we find
$x^2u''(x)=Lx^2+o(x^2)$, $xu'(x)=x+Lx^2+o(x^2)$ and $u(x)=x+Lx^2/2+o(x^2)$ as $x\to 0$, and on substituting these
expressions into equation \eqref{0} one concludes that it cannot hold near $x=0$. We have thus proved $u'''>0$ near zero.\\
Next, dividing \eqref{0} by $x$ we obtain
\be{divided}
    xu''(x)=au'(x)+b\, \frac{u(x)}{x} + c\, \frac{(u'(x)-1)^2}{x}
    \qquad \mbox{for all } x\in (0,x_0).
\ee
Since $u''$ is increasing and negative, by \eqref{to 1} we find that
\bas
    u'(x)-1=\int_0^xu''(\xi)d\xi\le xu''(x)\le 0,
\eas
and, consequently,
\be{xu}
    xu''(x) \to 0 \qquad \mbox{as } x\to 0.
\ee
Substituting this into \eqref{divided} we obtain
\bas
    \lim_{x\to 0}c \, \frac{(u'(x)-1)^2}{x}=a+b.
\eas
Since $u'(x)-1\le 0$, this is equivalent to \eqref{asympt}.
\qed
\begin{cor}\label{nu} There is
a continuum of local solutions of (0.1),(0.2), with the property $0\leq u(x)\leq x$ and they all satisfy
\begin{align}
u(x) &= x-\frac{2}{3}\sqrt{\frac{a+b}{c}} x^{3/2} + o(x^{3/2})\label{aux1}\\
u'(x)&= 1-\sqrt{\frac{a+b}{c}} x^{1/2} + o(x^{1/2})\label{aux2}.
\end{align}
\end{cor}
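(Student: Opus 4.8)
The plan is to recognize that this corollary is an essentially immediate harvest of the two main results already established in this section, so that no genuinely new analysis is required; the work consists only in extracting the stated expansions from what has been proved. Throughout, the standing assumption $a+b>0$ is in force. First I would dispose of the existence assertion: the existence of a continuum of solutions satisfying $0\le u(x)\le x$ is precisely the content of Proposition~\ref{basicex}(i), which produces a distinct solution on $[0,x_0]$ for each admissible boundary value $u(x_0)$. It therefore remains only to verify that \emph{every} such solution obeys the asymptotics \eqref{aux1} and \eqref{aux2}.

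For the derivative, the expansion \eqref{aux2} is nothing more than a rewriting of the limit \eqref{asympt} proved in Proposition~\ref{uprimeZero}. Indeed, any solution with $u(x)\le x$ satisfies $(u'(x)-1)/\sqrt{x}\to-\sqrt{(a+b)/c}$ as $x\searrow 0$, which is exactly the statement that $u'(x)-1=-\sqrt{(a+b)/c}\,x^{1/2}+o(x^{1/2})$. No further work is needed here.

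For the expansion \eqref{aux1} of $u$ itself I would integrate \eqref{aux2} from the origin. By Lemma~\ref{u1} every such solution extends to a $C^1$ function on $[0,x_0]$ with $u(0)=0$ and $u'(0)=1$, so the fundamental theorem of calculus gives $u(x)=\int_0^x u'(\xi)\,d\xi$. Substituting the expansion for $u'$ and integrating term by term, using $\int_0^x 1\,d\xi=x$ and $\int_0^x \xi^{1/2}\,d\xi=\tfrac{2}{3}x^{3/2}$, produces the leading terms $x-\tfrac{2}{3}\sqrt{(a+b)/c}\,x^{3/2}$.

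The only point deserving a word of justification — and the single step I expect to be nontrivial, though it is entirely routine — is that the remainder integrates to the correct order, namely $\int_0^x o(\xi^{1/2})\,d\xi=o(x^{3/2})$. This follows from the standard $\eps$--$\delta$ estimate: writing $r(\xi):=u'(\xi)-1+\sqrt{(a+b)/c}\,\xi^{1/2}$, given $\eps>0$ one chooses $\delta$ so that $|r(\xi)|\le\eps\,\xi^{1/2}$ for $\xi\le\delta$, whence $\big|\int_0^x r(\xi)\,d\xi\big|\le\tfrac{2}{3}\eps\,x^{3/2}$ for $x\le\delta$. Combining this bound with the term-by-term integration of the previous paragraph yields \eqref{aux1} and completes the argument.
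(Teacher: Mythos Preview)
Your proof is correct and follows exactly the route the paper takes: multiplicity from Proposition~\ref{basicex}, the expansion \eqref{aux2} as a restatement of \eqref{asympt} from Proposition~\ref{uprimeZero}, and \eqref{aux1} by integrating \eqref{aux2}. Your explicit $\eps$--$\delta$ justification for $\int_0^x o(\xi^{1/2})\,d\xi=o(x^{3/2})$ is more detailed than the paper's one-line ``follows by integration,'' but the approach is identical.
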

\proof Multiplicity of solutions was proved in Proposition \ref{basicex}. Expansion \eqref{aux2} follows from \eqref{asympt}, and \eqref{aux1} follows by integration of \eqref{aux2}.
\qed
\begin{prop}\label{uprime}
  Let $u$ be a solution of \eqref{0}, \eqref{0.1} with $x_0=\infty$ such that $0\le
  u(x)\le x$ for all $x>0$.
  Then,  in addition to  \eqref{aux1} and \eqref{aux2}, we have that
  $u'(x)>0$, $u''(x)<0$ and $u'''(x)>0$ for all $x>0$. Moreover,
  \be{to 0}
    \lim_{x\to\infty} u'(x) = 0.
  \ee
\end{prop}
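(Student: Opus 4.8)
The relations \eqref{aux1} and \eqref{aux2} are already contained in Corollary \ref{nu}, since a global solution satisfying $0\le u\le x$ is in particular a local solution with $u\le x$; so only the global sign assertions and the limit \eqref{to 0} remain to be proved. The overall strategy is to upgrade the \emph{local} sign information near $0$ furnished by Proposition \ref{uprimeZero} to \emph{global} sign information. The mechanism is Lemma \ref{one root}, which guarantees that each of $u'$, $u''$, $u'''$ can change sign at most once on $(0,\infty)$; it then remains to exclude the single admissible sign change in each case, and here I would feed in the growth restriction $u(x)\le x$ together with the leading-order behaviour of \eqref{0} at infinity.

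First I would settle $u'>0$ on all of $(0,\infty)$. For $b\le 0$ this is exactly part iii) of Lemma \ref{monotony}. For $b>0$ it follows from part ii): its first and third alternatives both force $u'<0$ on an interval adjacent to $0$, contradicting the local positivity of $u'$ from Proposition \ref{uprimeZero}, so the middle alternative, namely $u'>0$ throughout, must hold.

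Next, the heart of the argument, I would prove $u''<0$ on $(0,\infty)$. Applying Lemma \ref{one root} to the differentiated equation \eqref{u3} with $y\equiv u'$, $g(x,y):=(a+b)y$ and $y^*:=0$ shows that $u''$ vanishes at most once; since $u''<0$ near $0$ and $u'>0$, the only surviving alternative is that there is $x_0$ with $u''<0$ on $(0,x_0)$, $u''>0$ on $(x_0,\infty)$ and $u'$ attaining a strict positive minimum at $x_0$. On $(x_0,\infty)$ the function $u'$ is then increasing, hence has a limit $L\in(0,\infty]$, and correspondingly $u(x)/x\to L$. If $L=\infty$ then $u'>1$ eventually and $u(x)>x$ for large $x$, contradicting $u\le x$. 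If $L\in(0,\infty)$, dividing \eqref{0} by $x$ gives $xu''=au'+b\,u/x-c(u'-1)^2/x\to(a+b)L>0$ as $x\to\infty$, so $u''(x)\sim(a+b)L/x$ is non-integrable and forces $u'(x)\to\infty$, contradicting finiteness of $L$. Hence no such $x_0$ exists and $u''<0$ throughout.

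With $u''<0$ and $u'>0$ in hand, $u'$ is decreasing and bounded below, so $L:=\lim_{x\to\infty}u'(x)\ge 0$ exists; the limit \eqref{to 0} then follows from the same asymptotic identity, since $L>0$ would again yield $xu''\to(a+b)L>0$, which is incompatible with $u''<0$, so $L=0$. Finally I would obtain $u'''>0$ on $(0,\infty)$ by applying Lemma \ref{one root} to the twice-differentiated equation \eqref{u4} with $y\equiv u''$, $g(x,y):=(2a+b-2)y-2cy^2$ and $y^*:=(2a+b-2)/(2c)$: once more $u'''$ changes sign at most once, and the only alternative consistent with $u'''>0$ near $0$ would make $u''$ attain an interior maximum at some $x_0$ and decrease thereafter, so that $u''(x)\le u''(x_1)<0$ for $x\ge x_1>x_0$; integrating this forces $u'(x)\to-\infty$, contradicting $u'>0$. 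Thus $u'''$ never vanishes and, being positive near $0$, stays positive. I expect the step establishing $u''<0$ to be the main obstacle: the purely local dichotomy of Lemma \ref{one root} is not enough, and one must combine the affine bound $u\le x$ with the leading-order relation $xu''\to(a+b)L$ to rule out the spurious interior minimum of $u'$.
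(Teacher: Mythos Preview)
Your proof is correct, and its overall architecture---using Lemma~\ref{one root} to show that each of $u'$, $u''$, $u'''$ has at most one zero, then excluding the possible sign change via the global constraint $0\le u\le x$---matches the paper exactly. The treatment of $u'>0$ and of $u'''>0$ is essentially identical to the paper's.

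The genuine difference lies in the contradiction arguments for $u''<0$ and for \eqref{to 0}. The paper passes to the logarithmic time scale $x=e^t$, sets $\tilde v(t)=u'(e^t)$, and works with the transformed equation
\[
\tilde v''=(a-1)\tilde v'+(a+b)\tilde v-2c(\tilde v-1)v'(e^t),
\]
arguing that convergence of $\tilde v$ to a positive limit forces $\tilde v''$ to converge to a nonzero constant, which is incompatible with $\tilde v'\to 0$. You instead stay in the $x$ variable, divide \eqref{0} by $x$, and observe directly that if $u'(x)\to L>0$ (hence $u(x)/x\to L$ and $(u'-1)^2/x\to 0$) then $xu''(x)\to(a+b)L>0$. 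In the supposed case $u''>0$ eventually, this makes $u''$ non-integrable and drives $u'\to\infty$, contradicting finiteness of $L$; in the established case $u''<0$, the positivity of the limit is an immediate contradiction. Your route avoids the change of variables and the auxiliary equation \eqref{vdot}, and the second application (for $L=0$) is in fact a shade sharper than the paper's, which again goes through $\tilde v''$. Both arguments exploit the same underlying obstruction, namely that $(a+b)L>0$ whenever $L>0$, but yours reaches it by a shorter path.
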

\proof
  The conclusion $u'(x)>0$ for all $x>0$ is a trivial consequence of
  Lemma \ref{monotony} iii) for $b\le 0$. In the case $b>0$, Lemma
  \ref{monotony} ii) implies that if $u$ is not increasing everywhere then there is $x_0>0$ such that $u'(x)<0$ on
  $(0,x_0)$ and this contradicts the facts that $u(0)=0$ and $u\ge0$.

  Next, Lemma \ref{one root} applied to equation \eqref{u3} with $y\equiv u'\ge 0$, $g(x,y):=(a+b)y\ge0$ and
  $y^*:=0$  shows that if $u''<0$ does not hold over $(0,\infty)$
  then $u''(x)>0$ for  all sufficiently large $x>0$. We show that the latter alternative is impossible.
  To this end, we let $v :=u'$ and $\tilde{v}(t):=v(x(t))$ with $x(t)=e^t$.
  Arguing by contradiction, since $v$ is eventually increasing and $u(x)\le x$, we must have $v\le 1$, which implies that
  $v$ and $\tilde{v}$ converge as $x\to\infty$ and $t\to\infty$, respectively. This in turn implies
  $v'(x(t))\to 0$ and $\tilde{v}'(t)\to 0$ as $t\to\infty$.  Since from \eqref{u3} we see that
  \be{vdot}
    \tilde{v}''(t)=(a-1)\tilde{v}'(t)+(a+b)\tilde{v}(t)-2c(\tilde{v}(t)-1)v'(x(t)),
  \ee
   we therefore obtain $\tilde{v}''(t) - (a+b)\tilde{v}(t)\to 0$ as $t\to \infty$. Since $\tilde{v}>0$ is increasing
  and convergent, so is $\tilde{v}''$. This is inconsistent with $\tilde{v}'(t)\to 0$ as $t\to\infty$.
  We have thus proved $u''(x)<0$ for all $x>0$.

  Now since $v$ is decreasing and bounded below by 0, it converges as
  $t\to\infty$. Therefore, both $v'(x(t))$ and $\tilde{v}'(t)$ converge to 0 as $t\to\infty$.
  From \eqref{vdot} we  thus obtain $\tilde{v}''(t) - (a+b)\tilde{v}(t)\to 0$ as $t\to \infty$.
  Should the limit of $\tilde{v}$ for $t\to\infty$ not be zero, the
  same would hold for $\tilde{v}''$. This, however, is in conflict
  with the convergence of $\tilde{v}'$ for $t\to\infty$. This proves statement \eqref{to 0}.

   Finally, in order to verify that $u'''>0$ throughout $(0,\infty)$, we note that from Lemma \ref{uprimeZero}
  we know that
  $u'''$ is positive near zero.
  Arguing by contradiction, we assume that $u''$ is not increasing everywhere.
  Lemma \ref{one root} applied to equation \eqref{u4} implies that in such case $u''$ must be eventually
  decreasing and therefore $\lim_{x\to \infty}u''(x)\neq 0$ which contradicts statement \eqref{to 0}.
\qed
The following inequality is related to the theory of speculative attacks, in which an agency artificially supports a (low) fixed price of an asset, using a limited amount of reserves. The inequality indicates that in order for the speculator to make expected profit or at least for her to break even, the price of the asset must always jump upwards after the speculative attack has exhausted the entire supply of the asset at the subsidized price.
\begin{cor}\label{inequality}
  Let $u$ be a solution of \eqref{0}, \eqref{0.1} with $a+b>0$ such that $0\le u(x)\le x$ for all $x>0$. Then
  \be{ineq}
    1+u'(x)>2 \, \frac{u(x)}{x} \qquad \mbox{for all } x>0.
  \ee
\end{cor}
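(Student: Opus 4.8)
The plan is to reduce the claimed inequality to a statement about a single auxiliary ``gap'' function and to exploit the third-order sign information supplied by Proposition~\ref{uprime}. Concretely, I would set
\[
  \phi(x):=x\bigl(1+u'(x)\bigr)-2u(x),\qquad x\ge 0,
\]
so that \eqref{ineq} is exactly the assertion $\phi(x)>0$ for all $x>0$. Since $u\in C^0([0,\infty))$ with $u(0)=0$, we have $\phi(0)=0$, and because $u$ is $C^\infty$ on $(0,\infty)$ the function $\phi$ is smooth there as well.

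The key observation is that differentiating twice collapses the expression onto $u'''$. Indeed,
\[
  \phi'(x)=1-u'(x)+x\,u''(x),\qquad \phi''(x)=x\,u'''(x).
\]
By Proposition~\ref{uprime} we have $u'''(x)>0$ for all $x>0$, hence $\phi''>0$ on $(0,\infty)$; that is, $\phi$ is strictly convex. It then remains only to pin down the behaviour of $\phi'$ at the singular endpoint. For this I would combine $u'(x)\to 1$ as $x\searrow 0$ (Lemma~\ref{u1}) with $x\,u''(x)\to 0$ as $x\searrow 0$, the latter being precisely \eqref{xu} from the proof of Proposition~\ref{uprimeZero}; these yield $\lim_{x\searrow 0}\phi'(x)=0$.

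With these two facts the conclusion is immediate: since $\phi'$ is strictly increasing on $(0,\infty)$ and tends to $0$ as $x\searrow 0$, we obtain $\phi'(x)>0$ for every $x>0$, whence $\phi$ is strictly increasing; together with $\phi(0)=0$ this forces $\phi(x)>0$ for all $x>0$, which is \eqref{ineq}. As a consistency check, the asymptotics \eqref{aux1}, \eqref{aux2} give $\phi(x)=\tfrac13\sqrt{(a+b)/c}\,x^{3/2}+o(x^{3/2})$, confirming positivity near $0$.

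I expect the only genuine difficulty to be conceptual rather than computational, namely guessing the correct auxiliary function: the cancellations that reduce $\phi''$ to the single monotone quantity $x\,u'''$ are exactly what make the argument work. Without them one is left trying to sign $\phi'=1-u'+x\,u''$ directly, where the two competing terms ($1-u'>0$ but $x\,u''<0$) obscure matters. Once the convexity of $\phi$ is in hand, the boundary analysis relies only on already-established limits, so no genuinely new estimate is required.
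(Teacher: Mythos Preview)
Your proof is correct and is essentially the paper's own argument repackaged: both hinge on $u'''>0$ from Proposition~\ref{uprime}, and your intermediate step $\phi'(x)=1-u'(x)+xu''(x)>0$ is precisely the secant--tangent inequality $\frac{u'(x)-u'(0)}{x}<u''(x)$ that the paper invokes for the strictly convex function $u'$, after which both arguments integrate to reach \eqref{ineq}. The only cosmetic difference is that the paper obtains this inequality directly from convexity of $u'$ on $[0,\infty)$ (needing only $u'(0)=1$), whereas you deduce it from $\phi''>0$ together with the limit $xu''(x)\to 0$ from \eqref{xu}.
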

\proof
  Since $u'''(x)>0$ for $x>0$ by Proposition \ref{uprime}, the
  function $u'$ is strictly convex on $[0,\infty)$. Therefore
  \bas
    \frac{u'(x)-u'(0)}{x} < u''(x) \qquad \mbox{for all } x>0.
  \eas
  On multiplying both sides by $x$, utilizing $u'(0)=1$ and integrating we obtain
  \bas
    u(x)-x<xu'(x)-u(x)
    \qquad \mbox{for all } x>0,
  \eas
  which yields the desired inequality.
\qed
\mysection{Uniqueness of the global solution bounded between \\ 0 and $x$}
\begin{prop}\label{uniqueness2}
There is one, and only one, solution $u$ of \eqref{0}, \eqref{0.1}
in $[0,\infty)$ which has the additional property that $0\le
u(x)\le x$ for all $x>0$. This solution necessarily satisfies
$u>0$, $u'>0$, $u''<0$, and $u'''>0$  on $(0,\infty)$.
\end{prop}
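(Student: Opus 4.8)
The plan is to reduce the statement to its only genuinely new ingredient, uniqueness, since the rest is already available. Existence of at least one solution on $[0,\infty)$ with $0\le u(x)\le x$ is precisely Proposition \ref{basicex}(ii), and for any such solution Proposition \ref{uprime} already delivers $u'>0$, $u''<0$ and $u'''>0$ on $(0,\infty)$; the remaining assertion $u>0$ on $(0,\infty)$ is immediate from $u(0)=0$ together with $u'>0$. So the whole task is to show that two solutions $u_1,u_2$ of \eqref{0}, \eqref{0.1} with $0\le u_i\le x$ must coincide.

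I would set $w:=u_1-u_2$. Subtracting the two instances of \eqref{0} and using the factorization $(u_1'-1)^2-(u_2'-1)^2=(u_1'+u_2'-2)\,w'$ yields the linear equation
\[
x^2 w'' = P(x)\,w' + b\,w, \qquad P(x):=ax-c\big(u_1'(x)+u_2'(x)-2\big).
\]
The crucial point is that I cannot hope to force $w\equiv 0$ from the behaviour near $x=0$: by Corollary \ref{nu} one only has $w(0)=0$, $w=o(x^{3/2})$ and $w'=o(x^{1/2})$, and the singular initial point carries no local uniqueness (this is exactly the multiplicity phenomenon of Corollary \ref{nu}). Instead I would differentiate once and study $W:=w'=u_1'-u_2'$, which satisfies
\[
x^2 W'' = \big(P(x)-2x\big)W' + Q(x)\,W, \qquad Q(x):=P'(x)+b=(a+b)-c\big(u_1''(x)+u_2''(x)\big).
\]

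Now the mechanism of Lemma \ref{one root} becomes available. Since $u_1''<0$ and $u_2''<0$ by Proposition \ref{uprime}, one has $Q(x)>a+b>0$ for all $x>0$, so the $W$-equation is of the form \eqref{basic} with $g(x,W)=Q(x)W$ and $y^*=0$ meeting the sign hypothesis of Lemma \ref{one root}. Moreover $W$ vanishes at both ends of the interval: $W(x)\to 0$ as $x\to 0$ because $u_i'(0)=1$ (Lemma \ref{u1}, equivalently \eqref{aux2}), and $W(x)\to 0$ as $x\to\infty$ by \eqref{to 0}. Were $W$ not identically constant, the dichotomy of Lemma \ref{one root} would apply on $(0,\infty)$: either $W$ has no critical point, hence is strictly monotone and cannot return to the same limit $0$ at both ends, or $W$ has a single critical point $x_0$ and is bounded strictly away from $0$ (either $W>W(x_0)>0$ or $W<W(x_0)<0$ for all $x\ne x_0$), which again contradicts $W\to 0$. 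Thus $W$ is constant, and since $W\to 0$ this gives $W\equiv 0$; with $w(0)=0$ we conclude $w\equiv 0$, i.e. $u_1\equiv u_2$.

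The conceptual heart, and the step I expect to need the most care, is the passage from $w$ to $W=w'$: it is what trades the intractable singular endpoint for the sign-definite structure that Lemma \ref{one root} exploits. Everything hinges on two facts that must be secured before Lemma \ref{one root} is invoked, namely $Q>0$ (resting on the concavity $u''<0$ of Proposition \ref{uprime}) and the two-sided vanishing of $W$ (resting on $u'(0)=1$ from Lemma \ref{u1} and $u'(\infty)=0$ from \eqref{to 0}). I would also verify carefully that $W$ extends continuously to $x=0$ with $W(0)=0$, and that the argument of Lemma \ref{one root}, being entirely interior, remains valid on $(0,\infty)$ even though $Q(x)\to\infty$ as $x\to 0$, so that the lemma is genuinely applicable here.
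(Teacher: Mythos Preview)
Your proposal is correct and follows essentially the same route as the paper. The paper packages the differentiation step into a separate Lemma (Lemma \ref{uniqueness1}), writing the $w$-equation asymmetrically as $x^2 w'' = axw' + bw - 2c(u'-1)w' - cw'^2$ so that only one solution's concavity ($u''\le 0$) is needed to make the coefficient $a+b-2cu''$ positive; it then applies Lemma \ref{one root} to $w'$ exactly as you do, and finishes with the same two-endpoint contradiction $w'(0)=0=\lim_{x\to\infty}w'(x)$. Your symmetric version, with $Q=(a+b)-c(u_1''+u_2'')$, is an equally valid (and arguably cleaner, since the $w$-equation becomes linear) rearrangement of the same computation.
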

The proposition stems from the following result:

\begin{lem}\label{uniqueness1}
  Let $u\neq v$ be two solutions of \eqref{0} in $[0,\infty)$ which satisfy $u(0)=v(0)$, $u'(0)=v'(0)$
  and $u''\le 0$ on $(0,\infty)$. Then $w:=v-u$ satisfies either $w''>0$ on $(0,\infty)$ or $w''<0$ throughout
  $(0,\infty)$.
\end{lem}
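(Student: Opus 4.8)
The plan is to reduce the statement to an application of Lemma \ref{one root} to the derivative $w'$. Writing \eqref{0} for both $u$ and $v$ and subtracting, the quadratic terms combine through $(v'-1)^2-(u'-1)^2=(u'+v'-2)w'$, so that $w:=v-u$ solves the \emph{linear} second-order equation
\be{w-eq}
  x^2 w'' = \big[ax - c(u'+v'-2)\big]w' + bw \qquad \mbox{on } (0,\infty).
\ee
Since $u(0)=v(0)=0$ and, by Lemma \ref{u1}, $u'(x),v'(x)\to 1$ as $x\searrow 0$, we have $w(0)=0$ and $w'(x)\to 0$ as $x\searrow 0$; moreover $w\not\equiv 0$, and $w'$ is nonconstant (were $w'$ constant it would equal its boundary value $0$, forcing $w\equiv 0$). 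Because \eqref{w-eq} is a regular linear equation away from the origin, $u,v\in C^\infty((0,\infty))$ by the remark following Proposition \ref{prop1}, so $w'\in C^0([0,\infty))\cap C^2((0,\infty))$ as required by Lemma \ref{one root}.

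Next I would differentiate \eqref{w-eq}. Setting $f(x):=ax-c(u'+v'-2)$, one finds that $\phi:=w'$ satisfies
\be{phi-eq}
  x^2\phi'' = (f-2x)\phi' + \big[a+b-c(u''+v'')\big]\phi \qquad \mbox{on } (0,\infty).
\ee
This has exactly the structure demanded by Lemma \ref{one root}, with $y\equiv\phi$, an (arbitrary) continuous first-order coefficient $f-2x$, and $g(x,\phi):=[a+b-c(u''+v'')]\phi$ together with $y^*:=0$. The sign hypothesis there — $g(x,\phi)>0$ for $\phi>0$ and $g(x,\phi)<0$ for $\phi<0$ — holds precisely when the zeroth-order coefficient is strictly positive,
\bas
  a+b-c\big(u''(x)+v''(x)\big)>0 \qquad \mbox{for all } x>0.
\eas
Verifying this positivity is the crux of the argument, and it is here that the hypothesis $u''\le 0$ enters, rendering the contribution $-cu''\ge 0$ harmless and reducing matters to $c\,v''<a+b$. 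I expect this to be the main obstacle: it is immediate when $v$ is itself concave (as it is for the two bounded solutions compared in Proposition \ref{uniqueness2}, where $v''<0$), but in general it requires an a priori upper bound on $v''$.

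Granting the positivity, Lemma \ref{one root} applies to \eqref{phi-eq} and yields at most one point $x_0\in(0,\infty)$ with $\phi'(x_0)=w''(x_0)=0$. It then remains to exclude such an $x_0$ altogether. If one existed, the dichotomy of Lemma \ref{one root} would force either $w'(x)=\phi(x)>\phi(x_0)>0$ for all $x\neq x_0$, or $w'(x)<\phi(x_0)<0$ for all $x\neq x_0$; letting $x\searrow 0$ and invoking $w'(x)\to 0$ from the first paragraph contradicts the strict sign of $\phi(x_0)$ in either case. Hence $w'$ has no critical point in $(0,\infty)$, so $w''=\phi'$ never vanishes and therefore keeps a constant sign, giving the asserted alternative that $w''>0$ throughout $(0,\infty)$ or $w''<0$ throughout $(0,\infty)$. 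The only genuinely nontrivial ingredient is thus the coefficient positivity of the second paragraph; the rest is a direct unwinding of Lemma \ref{one root} combined with the boundary behavior $w'(0^+)=0$ forced by Lemma \ref{u1}.
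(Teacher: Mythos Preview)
Your approach is essentially the paper's, and the portion after the positivity step is correct. The gap you flag is real under your grouping, but it dissolves with a one-line regrouping that the paper exploits. Rather than writing the subtracted equation as linear in $w'$ with a coefficient depending on both $u'$ and $v'$, write
\[
  x^2w'' \;=\; axw' + bw - 2c(u'-1)w' - c(w')^2,
\]
which is the same identity since $-2c(u'-1)w'-c(w')^2=-cw'(u'+v'-2)$. Differentiating this form yields
\[
  x^2w''' \;=\; \big[(a-2)x-2c(u'-1)-2cw'\big]\,w'' \;+\; \big[a+b-2cu''\big]\,w',
\]
so in the framework of Lemma~\ref{one root} the zeroth-order coefficient is $a+b-2cu''$, which involves \emph{only} $u''$. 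The hypothesis $u''\le 0$ (together with $a+b>0$, implicit throughout Sections~3--4) gives $a+b-2cu''>0$ at once, and no information on $v''$ is required. Equivalently, you can reach the same equation from your \eqref{phi-eq} by substituting $v''=u''+w''=u''+\phi'$ into your coefficient $a+b-c(u''+v'')$ and moving the resulting $-c\phi\phi'$ term to the first-order side; the point is that $v''$-dependence can always be traded for $\phi'$-dependence, which is harmless in Lemma~\ref{one root}.

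With that fix, your final paragraph (no critical point of $w'$ is compatible with $w'(0)=0$) goes through verbatim and matches the paper's proof exactly.
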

\proof
  The function $w$ solves
  \bas
    x^2 w'' = axw' + bw - 2c(u'-1) w' - cw'^2
    \qquad \mbox{on } (0,\infty),
  \eas
  which on differentiation yields
  \be{w2}
    x^2 w''' = (a-2)xw'' + (a+b)w' - 2cu''w' - 2c(u'-1)w'' - 2cw'w''
    \qquad \mbox{on } (0,\infty).
  \ee
  Lemma \ref{one root} applied to equation \eqref{w2} with $y\equiv w'$, $g(x,y):=(a+b-2cu''(x))y$ and $y^*:=0$
  shows that $w''$ can have at most one root.
  Now the existence of such a root
  $x_0>0$ would imply either $w'>0$, $w''(x)<0$ for $0<x<x_0$ and $w''(x)>0$ for $x>x_0$; or
  $w'<0$, $w''(x)>0$ for $0<x<x_0$ and $w''(x)<0$ for $x>x_0$. This however contradicts $w'(0)=0$.
  Thus one must have either $w''(x)>0$ or $w''(x)<0$ for all $x>0$.
\qed
\proofc of Proposition \ref{uniqueness2}.\quad
  Suppose there are two solutions $u\neq v$ bounded between 0 and $x$ and let $w:=v-u$.
  Proposition \ref{uprime} yields $\lim_{x\to\infty} u'(x)=\lim_{x\to\infty}v'(x) =0$ and therefore
  \be{wprime}
    \lim_{x\to\infty} w'(x)=0.
  \ee
  On the other hand, Proposition \ref{uprime} also gives $u'(0)=v'(0)=1$, implying $w'(0)=0$. We can thus employ
  Lemma \ref{uniqueness1}
  to obtain that either $w''(x)>0$ or  $w''(x)<0$ for all $x>0$. In view of $w'(0)=0$ both alternatives contradict
  \eqref{wprime}.
  The claimed
  further properties of the unique solution bounded between $0$ and $x$ follow from Proposition \ref{uprime}.
\qed
\mysection{Finer aspects of non-uniqueness}
By Corollary \ref{asympt} there is a continuum of local solutions
sharing the same asymptotics up to degree 3/2 at 0. The solutions
are parametrized by their values at any $x>0$.  In this section we
establish existence of disjoint continua of local solutions of
(0.1), (0.2) distinguished by refinement of their asymtotics at 0.

We first show  that the higher order terms specification of
\eqref{aux2}  can be somewhat sharpened. Denote
$$k=\sqrt{\frac{a+b}{c}}$$
\begin{lem}\label{lem_n1}
  Let $u, x_0, x_1$ be as in Corollary \ref{asympt}.
  Then there exists $l>0$ such that
  \be{kl}
    \Big| \frac{u'(x)-1}{\sqrt{x}} + k \Big| \le l\sqrt{x}  \qquad \mbox{for all } x\in (0,x_1).
  \ee
If $u$ is the unique solution provided by Proposition
\ref{uniqueness2} then \eqref{kl} holds for all $0\le x<\infty$
for some $l>0$.
\end{lem}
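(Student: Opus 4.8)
The plan is to sharpen the leading-order asymptotics of Proposition \ref{uprimeZero} by one order in $\sqrt{x}$, establishing that the error $\frac{u'(x)-1}{\sqrt{x}}+k$ is itself $O(\sqrt{x})$ near zero. Since Proposition \ref{uprimeZero} already gives $\frac{u'(x)-1}{\sqrt{x}}\to -k$, the content of \eqref{kl} is a \emph{rate} on this convergence. The natural device is to introduce the rescaled quantity $\psi(x):=\frac{u'(x)-1}{\sqrt{x}}$ (or equivalently $v:=u'-1=\sqrt{x}\,\psi$) and derive a first-order differential equation for $\psi$ from the differentiated equation \eqref{u3}. One would then show that $\psi(x)+k$ satisfies a linear-type inequality forcing it to decay no slower than $\sqrt{x}$.

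**First I would** set $v:=u'-1$, so that by Proposition \ref{uprimeZero} we have $v<0$, $v'=u''<0$, $v''=u'''>0$ on $(0,x_1)$, and $v(x)=-k\sqrt{x}+o(\sqrt{x})$. Dividing the original equation \eqref{0} by $x$ and using $u(x)=x+\int_0^x v$, one obtains from \eqref{divided} the relation
\[
x\,v'(x)=a\,(1+v)+b\,\frac{u(x)}{x}+c\,\frac{v^2(x)}{x},
\]
and since $u(x)/x\to 1$ and $v\to 0$, the dominant balance near zero is $c\,v^2/x\approx a+b$, i.e. $v^2\approx \frac{a+b}{c}\,x=k^2x$, consistent with $v\approx -k\sqrt{x}$. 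To extract the next order, I would substitute the ansatz $v(x)=-k\sqrt{x}\,(1+\eta(x))$ with $\eta(x)\to 0$, plug into the rescaled equation, and collect terms: the goal is to show $\eta(x)=O(\sqrt{x})$, which is equivalent to \eqref{kl}. This reduces to bounding the correction coming from the $\frac{u(x)}{x}-1=\frac1x\int_0^x v$ term (which is itself $O(\sqrt{x})$ by the leading asymptotics) and from $a(1+v)$.

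**The cleaner route**, which I expect to be the one actually taken, is to work directly with $\phi(x):=\frac{v(x)}{\sqrt{x}}+k=\frac{u'(x)-1}{\sqrt{x}}+k$ and derive a differential inequality for it. Differentiating and using \eqref{u3}, together with the signs $v<0,\ v'<0,\ v''>0$ and the known limits, one should arrive at an estimate of the form $|\phi(x)|\le C\sqrt{x}$ on a possibly smaller interval; for the \emph{global} solution of Proposition \ref{uniqueness2} the same bound extends to all $x\in[0,\infty)$ because there $u'(x)\to 0$ as $x\to\infty$ (so $\frac{u'(x)-1}{\sqrt{x}}\to 0$ and $\phi(x)=k+o(1)$ stays bounded), and continuity plus the local estimate near zero then allow one to choose $l$ uniformly. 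The extension to $[0,\infty)$ is essentially a compactness/continuity argument: on $[x_1,\infty)$ the ratio $\frac{|u'(x)-1|}{\sqrt{x}}$ is continuous and, by \eqref{to 0}, bounded, so dividing by $\sqrt{x}$ keeps it controlled for large $x$.

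**The main obstacle** will be the careful second-order expansion that upgrades $o(\sqrt{x})$ to $O(\sqrt{x})$: one must track the $\frac1x\int_0^x v$ term precisely enough, since a crude bound only reproduces the known limit without a rate. Concretely, writing $v=-k\sqrt{x}+r(x)$ with $r=o(\sqrt{x})$, the integral term contributes $\frac1x\int_0^x(-k\sqrt\xi+r(\xi))\,d\xi=-\tfrac{2}{3}k\sqrt{x}+\frac1x\int_0^x r$, and the whole argument hinges on showing the quadratic balance $c\,v^2/x=a+b$ forces $r=O(x)$ (not merely $o(\sqrt{x})$), i.e. that the \emph{linear-in-$\sqrt{x}$} correction to $v$ is pinned down. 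Once the local bound $|r(x)|\le C x$ is in hand, \eqref{kl} follows immediately with $l=C/k$, and the global statement for the distinguished solution is then routine given the decay \eqref{to 0}.
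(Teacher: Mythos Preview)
Your plan heads toward an ODE analysis for $\eta$ (or a differential inequality for $\phi$) via the differentiated equation \eqref{u3}, and while such a route could in principle be pushed through, it is more involved than what the paper actually does, and your sketch omits precisely the term that needs to be controlled. In the relation
\[
xv'(x)=a(1+v)+b\,\frac{u(x)}{x}-c\,\frac{v^2(x)}{x}
\]
(note the sign on the quadratic term), you propose to bound the corrections from $a(1+v)$ and $b\,\frac{u}{x}$, but say nothing about $xv'=xu''$ on the left. That is the whole point: without an a priori bound on $xu''$, substituting the ansatz $v=-k\sqrt{x}(1+\eta)$ leaves you with a genuine first-order ODE for $\eta$ that still has to be analyzed.

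The paper's proof avoids any differential inequality by exploiting the sign information you already recorded. Since $u''<0$ and $u'''>0$ on $(0,x_1)$, the function $u''$ is increasing and negative, hence
\[
u'(x)-1=\int_0^x u''(\xi)\,d\xi\le x\,u''(x)\le 0,
\]
so $|xu''(x)|\le |u'(x)-1|\le c_1\sqrt{x}$. With this, \emph{all three} terms $au'-a$, $b\,\frac{u}{x}-b$, and $xu''$ on the right of the rearranged equation are $O(\sqrt{x})$, and one reads off directly that $\varphi(x):=\frac{u'(x)-1}{\sqrt{x}}$ satisfies
\[
\bigl|c\varphi^2(x)-ck^2\bigr|\le c_2\sqrt{x}.
\]
Now the argument is purely algebraic: since $\varphi\to -k\neq 0$, the elementary bounds $\sqrt{1+\xi}\le 1+\xi/2$ and $\sqrt{1-\xi}\ge 1-\xi/\sqrt{2}$ (for small $\xi$) give $|\varphi(x)+k|\le l\sqrt{x}$. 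No recourse to \eqref{u3} or to an ODE for $\phi$ is needed.

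Your treatment of the global case is fine and matches the paper: for the distinguished solution, $u'(x)\to 0$ as $x\to\infty$ makes $\frac{u'(x)-1}{\sqrt{x}}$ bounded on $[x_1,\infty)$, so \eqref{kl} extends by enlarging $l$. (Minor slip: if $v=-k\sqrt{x}+r$ with $|r|\le Cx$, then $l=C$, not $C/k$.)
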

\proof
  Since $\frac{u'(x)-1}{\sqrt{x}}\to -k$ as $x\searrow 0$ by Proposition \ref{uprimeZero},
  we can find $x_1>0, \ c_1>0$ such that
  \be{n1.2}
    |u'(x)-1| \le c_1 \sqrt{x}
    \qquad \mbox{for } 0< x<x_1.
  \ee
  Therefore trivially
  \be{n1.3}
    |au'(x)-a| \le c_1 |a| \sqrt{x}
    \qquad \mbox{for all } 0<x<x_1
  \ee
  and also
  \be{n1.4}
    \Big| \frac{bu(x)}{x}-b \Big|
    = \frac{|b|}{x} \cdot \Big| \int_0^x (u'(y)-1) dy \Big|
    \le \frac{c_1 |b|}{x} \cdot \int_0^x \sqrt{y}{dy}
    = \frac{2c_1 |b|}{3} \sqrt{x}
    \qquad \mbox{for all } 0<x<x_1.
  \ee
  Taking $x_1$ sufficiently small,  using Proposition \ref{uprimeZero} we obtain that
  $u''\le 0$ and $u'''\ge 0$ and hence $|xu''(x)| \le |u'(x)-1|$ for all $0<x<x_1$, from (\ref{n1.2})
  we also infer that
  \bas
    |xu''(x)| \le c_1 \sqrt{x}
    \qquad \mbox{for all } 0<x<x_1.
  \eas
  Accordingly, from (\ref{0}) we obtain that $\varphi(x):=\frac{u'(x)-1}{\sqrt{x}}, 0<x<x_1$, satisfies
  \bea{n1.44}
    \Big| c\varphi^2(x)-ck^2\Big|
    &=& \bigg| (au'(x)-a) + \Big(\frac{bu(x)}{x}-b\Big) - xu''(x) \bigg| \nn\\
    &\le& c_2\sqrt{x}
    \qquad \mbox{for all } 0<x<x_1
  \eea
  with $c_2:=c_1 |a| + \frac{2c_1|b|}{3}+c_1$. Therefore $\varphi^2(x) \le k^2+ \frac{c_2}{c}\sqrt{x}$, so that
  \be{n1.5}
    -\varphi(x) = |\varphi(x)| \le k \cdot \sqrt{1+\frac{c_2\sqrt{x}}{ck^2}}
    \le k \cdot \Big(1+\frac{c_2\sqrt{x}}{2ck^2}\Big)
    \qquad \mbox{for all } 0<x<x_1,
  \ee
  where we have used that $\sqrt{1+\xi} \le 1+\frac{\xi}{2}$ for all $\xi>0$.
  Likewise, (\ref{n1.44}) entails that $\varphi^2(x) \ge k^2 - \frac{c_2}{c} \sqrt{x}$ for all $x>0$. Thus, since
  $\sqrt{1-\xi} \ge 1-\frac{\xi}{\sqrt{2}}$ for all $\xi \in (0,\frac{1}{2})$, we see that
  \be{n1.6}
    -\varphi(x) \ge k\cdot\sqrt{1-\frac{c_2\sqrt{x}}{ck^2}}
    \ge k-\frac{c_2\sqrt{x}}{\sqrt{2}ck}
    \qquad \mbox{for all  }x\in (0,x_2),
  \ee
  where $x_2:=\min\{x_1,(\frac{ck^2}{2c_2})^2\} $.

If $u$ is the global solution of Proposition \ref{uniqueness2},
then  (\ref{kl})
  is obvious in $[x_1,\infty)$ because
  $u'(x)\to 0$ as $x\to\infty$. This, together with (\ref{n1.5}) and (\ref{n1.6})
  concludes  the proof of the lemma. \qed
The core of our approach will be formed by the usage of on ordered pair of
sub- and supersolutions which deviate from our original solution by an exponentially small term near $x=0$.
As a preparation, let us compute the action of the operator $\E$, as defined in (\ref{8.2}), on such functions.
\begin{lem}\label{lem_n2}
  Suppose that $u$ is a solution of (\ref{0}), (\ref{0.1}) in $(0,x_0)$ for some $x_0>0$. Then for all $\eps>0$,
  $\alpha\in\R$ and $\beta\in\R$, the function $v$ defined by
  \be{n2.1}
    v(x):=u(x)-\eps x^\alpha e^{-\frac{\beta}{\sqrt{x}}}, \qquad x\in (0,x_0),
  \ee
  satisfies
  \bea{n2.2}
    \E v &=& \eps e^{-\frac{\beta}{\sqrt{x}}} \cdot \Bigg\{
    \frac{\beta^2}{4} x^{\alpha-1} \nn\\
    & & \hspace*{18mm}
    + \Big[ \Big(\alpha-\frac{3}{2}\Big) \frac{\beta}{2} + \frac{\alpha\beta}{2} - \frac{a\beta}{2}\Big] \cdot
        x^{\alpha-\frac{1}{2}} \nn\\
    & & \hspace*{18mm}
    + \Big[\alpha(\alpha-1) - a\alpha - b \Big] \cdot x^\alpha \nn\\
    & & \hspace*{18mm}
    +2c \cdot \frac{u'-1}{\sqrt{x}} \cdot \Big[ \frac{\beta}{2} x^{\alpha-1} + \alpha x^{\alpha-\frac{1}{2}}\Big] \nn\\
    & & \hspace*{18mm}
    -\eps c \cdot \Big[ \frac{\beta}{2} x^{\alpha-\frac{3}{2}} + \alpha x^{\alpha-1}\Big]^2 \cdot
    e^{-\frac{\beta}{\sqrt{x}}} \Bigg\}
  \eea
  for all $x\in (0,x_0)$.
\end{lem}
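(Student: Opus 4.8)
The plan is to reduce the statement to a single direct differentiation, exploiting two structural features of the operator $\E$ from \eqref{8.2}: that the given function $u$ annihilates it, and that its only nonlinearity is the quadratic term $-c(u'-1)^2$. First I would record that, since $u$ solves \eqref{0}, the definition \eqref{8.2} gives $\E u=0$ on $(0,x_0)$. Writing $w:=\eps x^{\alpha}e^{-\beta/\sqrt{x}}$, so that $v=u-w$, I would expand $\E v=-x^2v''+axv'+bv-c(v'-1)^2$ term by term. The three affine contributions split linearly in $u$ and $w$, while the quadratic term expands as
\[
-c(v'-1)^2=-c\big((u'-1)-w'\big)^2=-c(u'-1)^2+2c(u'-1)w'-c(w')^2.
\]
Collecting, and using $\E u=0$ to cancel the portion depending on $u$ alone, leaves the compact identity
\[
\E v=x^2w''-axw'-bw+2c(u'-1)w'-c(w')^2 \qquad\text{on }(0,x_0).
\]

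The second step is to compute $w'$ and $w''$ explicitly. Setting $\psi(x):=e^{-\beta/\sqrt{x}}$ and noting $\psi'=\tfrac{\beta}{2}x^{-3/2}\psi$, the product rule gives $w'=\eps\big(\alpha x^{\alpha-1}+\tfrac{\beta}{2}x^{\alpha-3/2}\big)\psi$ and, after one further differentiation,
\[
w''=\eps\Big[\tfrac{\beta^2}{4}x^{\alpha-3}+\big((\alpha-\tfrac32)\tfrac{\beta}{2}+\tfrac{\alpha\beta}{2}\big)x^{\alpha-5/2}+\alpha(\alpha-1)x^{\alpha-2}\Big]\psi.
\]
Forming $x^2w''-axw'-bw$ and factoring out $\eps\psi=\eps e^{-\beta/\sqrt{x}}$ then reproduces exactly the first three bracketed lines of \eqref{n2.2}, namely the coefficients of $x^{\alpha-1}$, $x^{\alpha-1/2}$ and $x^{\alpha}$. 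The fourth line is the cross term $2c(u'-1)w'$: writing $w'=\eps\psi\big(\alpha x^{\alpha-1}+\tfrac{\beta}{2}x^{\alpha-3/2}\big)$ and pulling out one factor $x^{-1/2}$ recasts the bracket as $\tfrac{\beta}{2}x^{\alpha-1}+\alpha x^{\alpha-1/2}$, matching the coefficient of $2c\,\tfrac{u'-1}{\sqrt{x}}$. Finally, $-c(w')^2=-\eps^2c\big(\tfrac{\beta}{2}x^{\alpha-3/2}+\alpha x^{\alpha-1}\big)^2\psi^2$ accounts for the last line, with the surplus factor $\psi$ being absorbed into the overall prefactor $\eps e^{-\beta/\sqrt{x}}$.

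There is no genuine obstacle here; the content is a careful but routine computation, and the single point demanding attention is the nonlinear term. The observation that makes the formula clean is that expanding $-c(v'-1)^2$ produces precisely one cross term $2c(u'-1)w'$ linear in the perturbation, together with the purely quadratic remainder $-c(w')^2$. Keeping track of the half-integer powers of $x$ generated by differentiating $e^{-\beta/\sqrt{x}}$, and of the single common factor $\eps e^{-\beta/\sqrt{x}}$, then yields \eqref{n2.2} directly.
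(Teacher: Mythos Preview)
Your proposal is correct and follows essentially the same approach as the paper: write $w=\eps x^{\alpha}e^{-\beta/\sqrt{x}}$, use $\E u=0$ together with the expansion $-c(v'-1)^2=-c(u'-1)^2+2c(u'-1)w'-c(w')^2$ to obtain $\E v=x^2w''-axw'-bw+2c(u'-1)w'-c(w')^2$, and then substitute the explicit expressions for $w'$ and $w''$. The computations and the final regrouping into the five displayed lines coincide with the paper's proof.
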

\proof
  We write
  \bas
    w(x):=\eps x^\alpha e^{-\frac{\beta}{\sqrt{x}}}, \qquad x\in (0,x_0),
  \eas
  so that $v=u-w$ and consequently
  \bas
    \E v &=& - x^2 u'' + axu' +bu \\
    & & + x^2 w'' -axw' -bw \\
    & & +c (u'-w'-1)^2
    \qquad \mbox{in } (0,x_0).
  \eas
  Since $(u'-w'-1)^2 = (u'-1)^2 - 2(u'-1)w' + w'^2$, using that $\E u \equiv 0$ we see that
  \be{n2.3}
    \E v = x^2 w'' - axw' -bw
    +2c(u'-1) w' - cw'^2
    \qquad \mbox{in } (0,x_0).
  \ee
  We now compute
  \bas
    w'(x)=\eps \frac{\beta}{2} x^{\alpha-\frac{3}{2}} e^{-\frac{\beta}{\sqrt{x}}}
    +\eps \alpha x^{\alpha-1} e^{-\frac{\beta}{\sqrt{x}}}
  \eas
  and
  \bas
    w''(x) = \eps \frac{\beta^2}{4} x^{\alpha-3} e^{-\frac{\beta}{\sqrt{x}}}
    + \eps \Big(\alpha-\frac{3}{2} \Big) \frac{\beta}{2} x^{\alpha-\frac{5}{2}} e^{-\frac{\beta}{\sqrt{x}}}
    + \eps \frac{\alpha\beta}{2} x^{\alpha-\frac{5}{2}} e^{-\frac{\beta}{\sqrt{x}}}
    + \eps \alpha(\alpha-1) x^{\alpha-2} e^{-\frac{\beta}{\sqrt{x}}}
  \eas
  and hence obtain from (\ref{n2.3}) that
  \bas
    \E v &=&
    \eps e^{-\frac{\beta}{\sqrt{x}}} \cdot \Bigg\{
    \frac{\beta^2}{4} x^{\alpha-1}
    + \Big(\alpha-\frac{3}{2} \Big) \frac{\beta}{2} x^{\alpha-\frac{1}{2}}
    +\frac{\alpha\beta}{2} x^{\alpha-\frac{1}{2}}
    + \alpha(\alpha-1) x^\alpha \\
    & & \hspace*{18mm}
    -\frac{a\beta}{2} x^{\alpha-\frac{1}{2}} - a\alpha x^\alpha - bx^\alpha \\
    & & \hspace*{18mm}
    +2c\cdot \frac{u'-1}{\sqrt{x}} \cdot \sqrt{x} \cdot
        \Big[ \frac{\beta}{2} x^{\alpha-\frac{3}{2}} + \alpha x^{\alpha-1} \Big] \\
    & & \hspace*{18mm}
    -\eps c \Big[ \frac{\beta}{2} x^{\alpha-\frac{3}{2}} + \alpha x^{\alpha-1} \Big]^2 e^{-\frac{\beta}{\sqrt{x}}} \Bigg\}
    \qquad \mbox{in } (0,x_0).
  \eas
  On straightforward rearrangements, this yields (\ref{n2.2}).
\qed
We can now identify an appropriate family of subsolutions for (\ref{0}).
\begin{lem}\label{lem_n3}
  Suppose that for some $x_0>0$, $u$ is a solution of (\ref{0}), (\ref{0.1}) in $(0,x_0)$ satisfying (\ref{kl})
  with some $l>0$.
  Then for any real number $\alpha<\frac{3}{2}+a-2cl$ there exists $x_3=x_3(\alpha) \in (0,x_0)$ such that for each
  $\eps>0$, the function $\uv$ defined by
  \be{n3.1}
    \uv(x):=u(x)-\eps x^\alpha e^{-\frac{4ck}{\sqrt{x}}}, \qquad x\in (0,x_3),
  \ee
  satisfies
  \be{n3.2}
    \E \uv < 0 \qquad \mbox{in } (0,x_3).
  \ee
\end{lem}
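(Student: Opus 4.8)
The plan is to invoke Lemma \ref{lem_n2} with the specific choice $\beta = 4ck$ and to show that under the stated hypotheses the curly bracket on the right-hand side of (\ref{n2.2}) is negative for all small $x$; since the prefactor $\eps e^{-4ck/\sqrt{x}}$ is strictly positive, this immediately yields $\E\uv < 0$. The whole argument reduces to identifying the dominant power of $x$ inside the bracket and pinning down its sign.

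First I would record why $\beta = 4ck$ is the correct choice. Among the five groups of terms in (\ref{n2.2}) the lowest power of $x$ is $x^{\alpha-1}$, occurring in the first term with coefficient $\frac{\beta^2}{4}$ and in the fourth term through $2c\cdot\frac{u'-1}{\sqrt{x}}\cdot\frac{\beta}{2}x^{\alpha-1}$. Writing $\frac{u'(x)-1}{\sqrt{x}} = -k + r(x)$ with $|r(x)|\le l\sqrt{x}$ by (\ref{kl}), the $x^{\alpha-1}$ coefficient equals $\frac{\beta^2}{4} - c\beta k + c\beta r(x)$. With $\beta = 4ck$ the constant part $\frac{\beta^2}{4} - c\beta k$ vanishes, so only $c\beta r(x)\,x^{\alpha-1}$ survives at this level; since $|r(x)x^{-1/2}|\le l$, this residual is in fact of order $x^{\alpha-1/2}$, bounded in absolute value by $4c^2 k l\, x^{\alpha-1/2}$.

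Consequently the effective leading power is $x^{\alpha-1/2}$, and I would factor it out. Collecting the genuine $x^{\alpha-1/2}$ contributions (the second term, and the $\alpha$-part of the fourth term with $\frac{u'-1}{\sqrt{x}}\to -k$) gives, after substituting $\beta=4ck$, the constant $2ck\big(\alpha - \frac32 - a\big)$; to this one adds the worst-case residual $\pm 4c^2kl$ from the promoted $x^{\alpha-1}$ term together with $2c\alpha r(x)\to 0$. The hypothesis $\alpha < \frac32 + a - 2cl$ is precisely the requirement that
\[
2ck\Big(\alpha - \tfrac32 - a\Big) + 4c^2 k l < 0,
\]
so the effective $x^{\alpha-1/2}$ coefficient is strictly negative and bounded away from zero for all sufficiently small $x$. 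The remaining pieces are harmless: the $x^\alpha$ term is a factor $\sqrt{x}$ smaller, while the last term $-\eps c[\tfrac{\beta}{2}x^{\alpha-3/2}+\alpha x^{\alpha-1}]^2 e^{-4ck/\sqrt{x}}$ is nonpositive and, carrying an extra exponential factor, decays faster than any power; having the favourable sign, it only helps and, being the sole $\eps$-dependent term, it does not affect the choice of threshold. Hence $x_3$ can be read off from the $\eps$-independent terms alone, which is what makes $x_3=x_3(\alpha)$ uniform in $\eps$.

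The main obstacle, and the only delicate point, is the bookkeeping around the residual $r(x)$. One cannot merely replace $\frac{u'-1}{\sqrt{x}}$ by its limit $-k$, because after the exact cancellation at order $x^{\alpha-1}$ the error $r(x)$ is promoted by one half-power and lands exactly at the now-dominant order $x^{\alpha-1/2}$; its worst-case size $4c^2kl$ is what sharpens the naive threshold $\alpha<\frac32+a$ into the stated $\alpha<\frac32+a-2cl$. Tracking the sign of this contribution (taking $r(x)$ as large as $+l\sqrt{x}$) is therefore essential, and everything else is a routine comparison of powers of $x$ as $x\searrow 0$.
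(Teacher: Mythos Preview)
Your proposal is correct and follows essentially the same route as the paper: apply Lemma~\ref{lem_n2} with $\beta=4ck$ so that the $x^{\alpha-1}$ term cancels, use (\ref{kl}) to control the residual at order $x^{\alpha-\frac12}$, observe that the resulting coefficient $2ck(\alpha-\tfrac32-a+2cl)$ is negative by hypothesis, absorb the $x^\alpha$ term, and drop the nonpositive $\eps$-dependent square. The only cosmetic difference is that the paper first secures $\frac{\beta}{2}x^{\alpha-1}+\alpha x^{\alpha-\frac12}\ge 0$ for small $x$ in order to multiply by the one-sided bound $\frac{u'-1}{\sqrt{x}}\le -k+l\sqrt{x}$, whereas you work directly with the two-sided residual $|r(x)|\le l\sqrt{x}$; the outcome and the threshold $x_3(\alpha)$ are the same.
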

\proof
  Given $\alpha<\frac{3}{2}+a-2cl$, thanks to the positivity of $c$ and $k$ we can find $x_1\in (0,x_0)$ such that both
  \be{n3.3}
    \frac{2ck}{\sqrt{x_1}} + \alpha \ge 0
  \ee
  and
  \be{n3.4}
    \frac{2ck\cdot (\frac{3}{2}+a-2cl-\alpha)}{\sqrt{x_1}} > \alpha(\alpha-1)-a\alpha-b+2cl\alpha
  \ee
  hold. Then for $\eps>0$ we define $\uv$ as in (\ref{n3.1}) and thus obtain from Lemma \ref{lem_n2}, applied to
  $\beta:=4ck$, that (\ref{n2.2}) holds for $v=\uv$ and all $x\in (0,x_1)$.
  In order to utilize (\ref{kl}) appropriately, we note that in view of (\ref{n3.3}) we have
  \bas
    \frac{\beta}{2} x^{\alpha-1} + \alpha x^{\alpha-\frac{1}{2}}
    = \Big(\frac{2ck}{\sqrt{x}} + \alpha \Big) \cdot x^{\alpha-\frac{1}{2}} \ge 0
    \qquad \mbox{for all } x\in (0,x_1).
  \eas
  We may thus multiply this by the inequality
  \bas
    \frac{u'(x)-1}{\sqrt{x}} \le -k + l\sqrt{x}
    \qquad \mbox{for all } x\in (0,x_1),
  \eas
  as resulting from (\ref{kl}), to infer from (\ref{n2.2}) on dropping a nonpositive term that
  \bas
    \E \uv &\le& \eps e^{-\frac{\beta}{\sqrt{x}}} \cdot \Bigg\{
    \frac{\beta^2}{4} x^{\alpha-1} \nn\\
    & & \hspace*{18mm}
    + \Big[ \Big(\alpha-\frac{3}{2}\Big) \frac{\beta}{2} + \frac{\alpha\beta}{2} - \frac{a\beta}{2}\Big] \cdot
        x^{\alpha-\frac{1}{2}} \nn\\
    & & \hspace*{18mm}
    + \Big[\alpha(\alpha-1) - a\alpha - b \Big] \cdot x^\alpha \nn\\
    & & \hspace*{18mm}
    +2c (-k+l\sqrt{x}) \cdot \Big[ \frac{\beta}{2} x^{\alpha-1} + \alpha x^{\alpha-\frac{1}{2}}\Big]
    \Bigg\} \\[1mm]
    &=& \eps e^{-\frac{\beta}{\sqrt{x}}} \cdot \Bigg\{
    \Big[\frac{\beta^2}{4} -ck\beta \Big] \cdot x^{\alpha-1} \nn\\
    & & \hspace*{18mm}
    + \Big[ \Big(\alpha-\frac{3}{2}\Big) \frac{\beta}{2} + \frac{\alpha\beta}{2} - \frac{a\beta}{2}
        -2ck\alpha+cl\beta \Big] \cdot x^{\alpha-\frac{1}{2}} \nn\\
    & & \hspace*{18mm}
    + \Big[\alpha(\alpha-1) - a\alpha - b +2cl\alpha \Big] \cdot x^\alpha \Bigg\}
    \qquad \mbox{in } (0,x_1).
  \eas
  Here the leading term disappears because $\beta=4ck$, whereas
  \bas
    \Big(\alpha-\frac{3}{2}\Big) \frac{\beta}{2} + \frac{\alpha\beta}{2} - \frac{a\beta}{2} -2ck\alpha+cl\beta
    &=& 2ck \cdot \Big[ \alpha-\frac{3}{2} + \alpha-a-\alpha+2cl\Big] \\
    &=& 2ck \cdot \Big[\alpha-\frac{3}{2}-a+2cl\Big] <0.
  \eas
  Hence, using (\ref{n3.4}) we infer that
  \bas
    \E \uv &\le& \eps e^{-\frac{\beta}{\sqrt{x}}} \cdot x^\alpha \cdot \bigg\{
    -\frac{2ck \cdot (\frac{3}{2}+a-2cl-\alpha)}{\sqrt{x}}
    +\alpha(\alpha-1) -a\alpha - b + 2cl\alpha \bigg\} \\[1mm]
    &<& 0
    \qquad \mbox{in } (0,x_1),
  \eas
  as desired.
\qed
Our construction of supersolutions to (\ref{0}) is similar.
\begin{lem}\label{lem_n4}
  Let $u$ be a solution of (\ref{0}), (\ref{0.1}) in $(0,x_0)$ for some $x_0>0$, which satisfies (\ref{kl})
  with some $l>0$.
  Then for each $\eps_0>0$ and $\alpha>\frac{3}{2}+a+2cl$ one can pick $x_4=x_4(\alpha) \in (0,x_0)$ such that for any
  $\eps \in (0,\eps_0)$, the function $\ov$ defined by
  \be{n4.1}
    \ov(x):=u(x)-\eps x^\alpha e^{-\frac{4ck}{\sqrt{x}}}, \qquad x\in (0,x_4),
  \ee
  satisfies
  \be{n4.2}
    \E \ov > 0 \qquad \mbox{in } (0,x_4).
  \ee
\end{lem}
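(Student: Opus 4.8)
The plan is to mirror the subsolution construction of Lemma \ref{lem_n3}, keeping the same exponential weight $\beta := 4ck$ but exploiting the reversed inequalities forced by the new hypothesis $\alpha > \frac{3}{2}+a+2cl$. First I would apply Lemma \ref{lem_n2} with $\beta = 4ck$ to the function $\ov$ from \eqref{n4.1}, so that \eqref{n2.2} holds for $v=\ov$ on $(0,x_0)$. Since $\alpha > \frac{3}{2}+a+2cl$ and $c,k>0$, I can pick $x_1\in(0,x_0)$ so small that the bracket $\frac{\beta}{2}x^{\alpha-1}+\alpha x^{\alpha-\frac{1}{2}} = \big(\frac{2ck}{\sqrt{x}}+\alpha\big)x^{\alpha-\frac{1}{2}}$ is nonnegative on $(0,x_1)$, exactly as in \eqref{n3.3}.

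Where the subsolution proof multiplied this nonnegative bracket by the upper estimate $\frac{u'-1}{\sqrt{x}}\le -k+l\sqrt{x}$, I would instead use the \emph{lower} estimate $\frac{u'-1}{\sqrt{x}}\ge -k-l\sqrt{x}$ from \eqref{kl}, which produces a lower bound for $\E\ov$. As in Lemma \ref{lem_n3}, the leading $x^{\alpha-1}$ coefficient $\frac{\beta^2}{4}-ck\beta$ cancels because $\beta=4ck$, and a direct rearrangement shows that the $x^{\alpha-\frac{1}{2}}$ coefficient now equals $2ck\big[\alpha-\frac{3}{2}-a-2cl\big]$, which is strictly \emph{positive} by hypothesis (the sign of the $cl$ term having flipped relative to the subsolution case). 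Thus $\E\ov$ is bounded below by a strictly positive multiple of $x^{\alpha-\frac{1}{2}}$, plus a possibly negative multiple of $x^\alpha$, minus the quadratic remainder term of \eqref{n2.2}.

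The one genuinely new feature, and the main obstacle, is that the final term of \eqref{n2.2}, namely $-\eps c\big[\frac{\beta}{2}x^{\alpha-\frac{3}{2}}+\alpha x^{\alpha-1}\big]^2 e^{-\beta/\sqrt{x}}$, cannot simply be discarded here: in the subsolution case it was nonpositive and dropping it preserved the desired upper bound, whereas now it works against the positivity we seek. This is precisely why the statement restricts to $\eps\in(0,\eps_0)$. I would retain this term and bound its magnitude by $\eps_0 c\big[\frac{\beta}{2}x^{\alpha-\frac{3}{2}}+\alpha x^{\alpha-1}\big]^2 e^{-\beta/\sqrt{x}}$, whose dominant behaviour is $\sim 4\eps_0 c^3 k^2 x^{2\alpha-3}e^{-\beta/\sqrt{x}}$; relative to the positive leading term $\sim x^{\alpha-\frac{1}{2}}$ it therefore carries the extra factor $x^{\alpha-\frac{5}{2}}e^{-\beta/\sqrt{x}}$, which tends to $0$ as $x\searrow 0$ because the exponential dominates every power. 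Finally I would shrink $x_1$ to some $x_4=x_4(\alpha)\le x_1$, depending also on $\eps_0$, on which the positive $x^{\alpha-\frac{1}{2}}$ contribution strictly exceeds the sum of the $x^\alpha$ term (itself lower order) and this exponentially small remainder, thereby obtaining $\E\ov>0$ on $(0,x_4)$ uniformly for $\eps\in(0,\eps_0)$, as claimed.
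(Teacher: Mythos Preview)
Your proposal is correct and follows essentially the same route as the paper. Both apply Lemma~\ref{lem_n2} with $\beta=4ck$, use the lower estimate $\frac{u'-1}{\sqrt{x}}\ge -k-l\sqrt{x}$ from \eqref{kl} against the nonnegative bracket, observe the cancellation of the $x^{\alpha-1}$ coefficient and the positivity of the $x^{\alpha-\frac{1}{2}}$ coefficient $2ck\big[\alpha-\frac{3}{2}-a-2cl\big]$, bound the quadratic remainder uniformly by replacing $\eps$ with $\eps_0$, and then choose $x_4$ small enough that the positive leading term dominates both the $x^\alpha$ term and the exponentially small remainder; the only cosmetic difference is that the paper packages this last step as an explicit inequality on $x_4$ (its \eqref{n4.4}) whereas you phrase it as an asymptotic comparison.
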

\proof
  Relying on our assumption on $\alpha$, let us first pick $x_4 \in (0,x_0)$ such that besides again
  \be{n4.3}
    \frac{2ck}{\sqrt{x_3}} + \alpha \ge 0,
  \ee
  the inequality
  \be{n4.4}
    \frac{2ck(\alpha-\frac{3}{2}-a-2cl)}{\sqrt{x_4}}
    > \big| \alpha(\alpha-1) - a\alpha - b - 2cl\alpha \big|
    + c\cdot \sup_{x\in (0,x_4)}
    \bigg\{ x^{-\alpha} \cdot \Big[ 2ck x^{\alpha-\frac{3}{2}} + \alpha x^{\alpha-1}\Big]^2
        \cdot e^{-\frac{4ck}{\sqrt{x}}} \bigg\}
  \ee
  holds.
  Then writing $\beta:=4ck$, we again have $\frac{\beta}{2} x^{\alpha-1}+\alpha x^{\alpha-\frac{1}{2}}\ge 0$
  in $(0,x_4)$. Hence, using that (\ref{kl}) implies that
  \bas
    \frac{u'(x)-1}{\sqrt{x}} \ge -k-l\sqrt{x}
    \qquad \mbox{for all } x\in (0,x_0),
  \eas
  from (\ref{n2.2}) we obtain
  \bas
    \E \ov &\ge& \eps e^{-\frac{4ck}{\sqrt{\beta}}} \cdot \Bigg\{
    \frac{\beta^2}{4} x^{\alpha-1} \nn\\
    & & \hspace*{18mm}
    + \Big[ \Big(\alpha-\frac{3}{2}\Big) \frac{\beta}{2} + \frac{\alpha\beta}{2} - \frac{a\beta}{2}\Big] \cdot
        x^{\alpha-\frac{1}{2}} \nn\\
    & & \hspace*{18mm}
    + \Big[\alpha(\alpha-1) - a\alpha - b \Big] \cdot x^\alpha \nn\\
    & & \hspace*{18mm}
    +2c \cdot (-k-l\sqrt{x}) \cdot \Big[ \frac{\beta}{2} x^{\alpha-1} + \alpha x^{\alpha-\frac{1}{2}}\Big] \nn\\
    & & \hspace*{18mm}
    -\eps_0 c \cdot \Big[ \frac{\beta}{2} x^{\alpha-\frac{3}{2}} + \alpha x^{\alpha-1}\Big]^2 \cdot
    e^{-\frac{\beta}{\sqrt{x}}} \Bigg\}
    \qquad \mbox{in } (0,x_4).
  \eas
  Since $\beta=4ck$, this reduces to
  \bas
    \E \ov \ge \eps e^{-\frac{4ck}{\sqrt{x}}} \cdot x^\alpha \cdot \Bigg\{
    \frac{2ck(\alpha-\frac{3}{2}-a-2cl)}{\sqrt{x}} + \alpha(\alpha-1)-a\alpha-b-2cl\alpha \\
    -\eps_0 c x^{-\alpha} \cdot \Big[ 2ck x^{\alpha-\frac{3}{2}} + \alpha x^{\alpha-1}\Big]^2
        \cdot e^{-\frac{4ck}{\sqrt{x}}} \Bigg\}
    \qquad \mbox{in } (0,x_4),
  \eas
  and hence (\ref{n4.4}) asserts (\ref{n4.2}).
\qed
By means of Proposition \ref{prop1a}, we can now infer the
existence of  infinitely many classes of continua of local
solutions to (\ref{0}), (\ref{0.1}). Here we shall strongly rely
on the fact that the numbers $\alpha$ in Lemma \ref{lem_n3} and
Lemma \ref{lem_n4} can be chosen in such a way that the functions
$\uv$ and $\ov$ defined in (\ref{n3.1}) and (\ref{n4.1}) are
ordered appropriately.
\begin{prop}\label{fine}
  Suppose that for some $x_0>0$, $u$ is a solution of (\ref{0}), (\ref{0.1}) in $(0,x_0)$ satisfying
  (\ref{kl}) with some $l>0$.
  Then for all $\ualpha \in (-\infty,\frac{3}{2}+a-2cl)$ and $\oalpha \in (\frac{3}{2}+a+2cl,\infty)$
  there exists $\hxz \in (0,x_0)$ such that given  $\eps \in (0,1)$ and any $\hu_0$ satisfying
  \bas
    \hu_0 \in  \Big( u(x)-\eps x^{\ualpha} e^{-\frac{4ck}{\sqrt{x}}} \, , \,
    u(x)-\eps x^{\oalpha} e^{-\frac{4ck}{\sqrt{x}}} \Big) \qquad \mbox{at } x=\hxz,
  \eas
  (\ref{0}), (\ref{0.1}) possesses a solution $\hu$ on $(0,\hxz)$ fulfilling
  \be{n5.1}
    u(x)-\eps x^{\ualpha} e^{-\frac{4ck}{\sqrt{x}}}
    \le \hu(x) \le
    u(x)-\eps x^{\oalpha} e^{-\frac{4ck}{\sqrt{x}}}
    \qquad \mbox{for all } x\in (0,\hxz)
  \ee
 as well  as $\hu(\hxz)=\hu_0$.
\end{prop}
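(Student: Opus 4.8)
The plan is to read the claim as a single application of the sub- and supersolution method packaged in Proposition \ref{prop1a}(i), where the lower and upper bounds appearing in \eqref{n5.1} play the roles of the subsolution and the supersolution. Concretely, writing $\beta:=4ck$ and fixing the prescribed exponents $\ualpha<\frac{3}{2}+a-2cl$ and $\oalpha>\frac{3}{2}+a+2cl$, I would set
\[
\uv(x):=u(x)-\eps x^{\ualpha}e^{-\frac{\beta}{\sqrt{x}}},\qquad
\ov(x):=u(x)-\eps x^{\oalpha}e^{-\frac{\beta}{\sqrt{x}}}.
\]
Lemma \ref{lem_n3} then furnishes $x_3=x_3(\ualpha)\in(0,x_0)$ with $\E\uv<0$ on $(0,x_3)$ for every $\eps>0$, while Lemma \ref{lem_n4}, applied with $\eps_0:=1$, furnishes $x_4=x_4(\oalpha)\in(0,x_0)$ with $\E\ov>0$ on $(0,x_4)$ for every $\eps\in(0,1)$. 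I would finally pick $\hxz$ to be any point of $(0,\min\{x_3,x_4,1\})$. It is worth noting that both lemmas were deliberately stated with the single exponential rate $\beta=4ck$, which is precisely the value that annihilates the leading $x^{\alpha-1}$ singularity in \eqref{n2.2}; this is why the admissible ranges for $\ualpha$ and $\oalpha$ straddle the common threshold $\frac{3}{2}+a$, and it is what makes $\uv$ and $\ov$ simultaneously usable.

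With this choice the hypotheses of Proposition \ref{prop1a}(i) on $[0,\hxz]$ follow quickly. Since $u\in C^0([0,x_0])\cap C^2((0,x_0))$ with $u(0)=0$, and since $x^{\alpha}e^{-\beta/\sqrt{x}}\to 0$ as $x\searrow 0$ for every real $\alpha$ (exponential decay dominates any power), both $\uv$ and $\ov$ belong to $C^0([0,\hxz])\cap C^2((0,\hxz))$ and satisfy $\uv(0)=\ov(0)=0$, which is \eqref{vnule}. The differential inequalities $\E\uv<0$ and $\E\ov>0$ on $(0,\hxz)$ are exactly the conclusions of Lemmas \ref{lem_n3} and \ref{lem_n4}. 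The one place where the separation between the two ranges is used is the ordering: it guarantees $\ualpha<\oalpha$, so for $0<x<1$ one has $x^{\ualpha}>x^{\oalpha}$, and multiplying by $-\eps e^{-\beta/\sqrt{x}}<0$ reverses this to give $\uv(x)<\ov(x)$ throughout $(0,\hxz)$ because $\hxz<1$. Thus (1.1)--(1.3) together with \eqref{vnule} all hold.

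Proposition \ref{prop1a}(i), applied on $[0,\hxz]$ with subsolution $\uv$ and supersolution $\ov$, then yields for every boundary value $\hu_0\in[\uv(\hxz),\ov(\hxz)]$ a solution $\hu$ of \eqref{0}, \eqref{0.1} on $[0,\hxz]$ satisfying $\uv\le\hu\le\ov$ on $(0,\hxz)$ and $\hu(\hxz)=\hu_0$; the strict inequality $\uv(\hxz)<\ov(\hxz)$ (again from $\hxz<1$) makes the open interval in the statement a nonempty subinterval of $[\uv(\hxz),\ov(\hxz)]$, and substituting the definitions of $\uv,\ov$ recovers \eqref{n5.1} verbatim. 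I expect no genuine analytic difficulty here, since the argument is essentially a bookkeeping assembly of the three earlier results; the only step demanding care is the \emph{simultaneous} calibration of $\hxz$, which must sit below both $x_3$ and $x_4$ so that the two differential inequalities are active, and below $1$ so that the ordering $\uv\le\ov$ and the nondegeneracy of the target interval are both secured.
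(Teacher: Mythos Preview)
Your proposal is correct and follows essentially the same route as the paper: invoke Lemma~\ref{lem_n3} for $\uv$ and Lemma~\ref{lem_n4} (with $\eps_0=1$) for $\ov$, set $\hxz$ at or below $\min\{1,x_3,x_4\}$, use $\ualpha<\oalpha$ together with $x<1$ to order $\uv<\ov$, and then apply Proposition~\ref{prop1a}(i). Your write-up is in fact somewhat more careful than the paper's---you spell out the continuity at $0$, the reason for the ordering, and the non-degeneracy of the target interval---but the underlying argument is identical.
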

\proof
  We invoke Lemma \ref{lem_n3} and Lemma \ref{lem_n4} with $\eps_0:=1$ to obtain $x_3\in (0,x_0)$ and $x_4\in (0,x_0)$
  such that for any $\eps \in (0,1)$, the functions $\uv$ and $\ov$ defined by
  $\uv(x):=u(x)-\eps x^{\ualpha} e^{-\frac{4ck}{\sqrt{x}}}, x\in (0,x_3)$, and
  $\ov(x):=u(x)-\eps x^{\oalpha} e^{-\frac{4ck}{\sqrt{x}}}, x\in (0,x_4)$, have the properties
  $\E \uv < 0$ on $(0,x_3)$ and $\E \ov > 0$ on $(0,x_4)$.
  Writing $\hxz:=\min\{1,x_3,x_4\}$, we moreover see using $\ualpha<\oalpha$ that $\uv<\ov$ throughout $(0,\hxz)$.
  Therefore the claim results upon an application of Proposition \ref{prop1a}(i).
\qed
\textbf{Acknowledgments.} We would like to thank an anonymous
referee for very detailed and thoughtful comments. This research
was undertaken while Ale\v s \v{C}ern\'y visited Comenius
University in Bratislava in spring 2012. The financial support of
the V\'{U}B Foundation under the "Visiting professor 2011" funding
scheme is gratefully acknowledged. Pavol Brunovsk\'y gratefully
acknowledges support by the grants VEGA 1/0711/12 and VEGA
1/2429/12.

\end{document}